\documentclass[12pt,a4paper,final]{amsart}
\usepackage{amssymb}
\usepackage{hyperref} 
\usepackage[notcite, notref]{showkeys}
\usepackage[utf8]{inputenc}

\theoremstyle{plain}
\newtheorem{theorem}{Theorem}[section]
\newtheorem{proposition}[theorem]{Proposition}
\newtheorem{corollary}[theorem]{Corollary}
\newtheorem{lemma}[theorem]{Lemma}

\theoremstyle{definition}

\newtheorem{example}[theorem]{Example}
\newtheorem{remark}[theorem]{Remark}

\theoremstyle{remark}

\numberwithin{equation}{section}

\newcommand{\N}{\mathbb N}
\newcommand{\Z}{\mathbb Z}

\newcommand{\R}{\mathbb R}

\DeclareMathOperator{\Ot}{O}

\newcommand{\Id}{\textup{Id}}

\DeclareMathOperator{\diag}{diag}

\DeclareMathOperator{\Aut}{Aut}

\title[Isospectral and not strongly isospectral spherical space forms]
{Isospectral and not strongly isospectral spherical space forms with non-cyclic fundamental group}

\author{Mauro Colantonio}
\address{Instituto de Matemática (INMABB), Departamento de Matemática, Universidad Nacional del Sur (UNS)-CONICET, Bahía Blanca, Argentina.}
\email{mauro.colantonio@uns.edu.ar}

\subjclass[2020]{58J53, 58J50.}
\keywords{Spherical space forms, Type I groups, isospectral, strongly isospectral, Laplace spectrum, almost conjugate subgroups.}
\thanks{The author was supported by a grant from SGCYT--UNS (PGI 24/L126).}
\date{\today}

\begin{document}

\begin{abstract}
We construct the first examples of isospectral but not strongly isospectral spherical space forms with non-cyclic fundamental groups. 
We obtain infinitely many such pairs with isomorphic fundamental groups of Type I. 
	
\end{abstract} 
	
	\maketitle
	
	
\section{Introduction}

Every Riemannian manifold $(M,g)$ has an associated Laplace--Beltrami operator $\Delta_g$. When $M$ is closed (compact and without boundary), its spectrum $\operatorname{Spec}(M,g)$ is a discrete sequence of non-negative real numbers, each repeated finitely many times according to its multiplicity. Two compact Riemannian manifolds are said to be \emph{isospectral} if their Laplace--Beltrami spectra coincide. 
A stronger notion is obtained by simultaneously considering all natural elliptic differential operators:
two closed Riemannian manifolds are called \emph{strongly isospectral} if every natural elliptic differential operator acting on the same natural vector bundle over both manifolds have the same spectra. 
In particular, strong isospectrality implies $p$-isospectrality (i.e.\ their Hodge-Laplacians on $p$-forms have the same spectra) for every degree $p$, and hence ordinary isospectrality (i.e.\ $p=0$). The converse does not hold in general (see \cite{LMR-onenorm}).

In this article, we focus on isospectrality among spherical space forms, that is, Riemannian manifolds of the form $S^q/G$, where $G$ is a finite subgroup of $O(q+1)$ acting freely on the unit sphere $S^q$. The quotient is endowed with constant sectional curvature equal to one, and its fundamental group is isomorphic to $G$. 

The first examples of non-isometric isospectral spherical space forms were constructed by Ikeda in \cite{Ikeda80_isosp-lens}. They were lens spaces and therefore had cyclic fundamental groups. Many further examples of isospectral lens spaces have since been obtained; see \cite{LMR-SaoPaulo} and the references therein.

The situation for spherical space forms with non-cyclic fundamental groups has been different. Ikeda \cite{Ikeda83} constructed the first non-isometric isospectral spherical space forms with non-cyclic fundamental groups. Gilkey~\cite{Gilkey85} subsequently proved that Ikeda's examples are in fact strongly isospectral. Later, Wolf~\cite{Wolf01} developed a general construction of strongly isospectral spherical space forms having a fixed fundamental group. In particular, he shows that any two irreducible spherical space forms with isomorphic fundamental groups are strongly isospectral. Here, \emph{irreducible} means the spherical space form is of the form $S^q/\rho(\Gamma)$ with $\Gamma$ a finite group and $\rho$ an irreducible orthogonal representation of degree $q+1$ satisfying that $\rho(\Gamma)$ acts freely on $S^q$.

More recently, \cite{ColantonioLauret-heatTypeI} shows that the isomorphism class of the fundamental group is not determined by the Laplace spectrum. More precisely, there exist infinitely many pairs of strongly isospectral spherical space forms with non-isomorphic fundamental groups.
 
 Summing up, all previously known examples of isospectral spherical space forms with non-cyclic fundamental groups arise from \cite{Ikeda83}, \cite{Gilkey85}, \cite{Wolf01}, or \cite{ColantonioLauret-heatTypeI}, and all of them are strongly isospectral.
 
 A natural question (see \cite[Question~5.13]{LMR-survey}, and  \cite[Question~5.5]{ColantonioLauret-heatTypeI}) is whether there are isospectral spherical space forms with non-cyclic fundamental groups that are not strongly isospectral. 
 The next result respond it positively.
 
 \begin{theorem}\label{thm:introduction-main}
 		There exist infinitely many pairs of spherical space forms with non-cyclic isomorphic fundamental group that are isospectral but not strongly isospectral. 
 \end{theorem}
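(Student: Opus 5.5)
The strategy is to reduce both spectral conditions to representation theory and then build explicit pairs from a fixed Type I group with two inequivalent free representations. For $\Gamma=\rho(G)\subset\Ot(q+1)$, strong isospectrality of $S^q/\rho(G)$ and $S^q/\rho'(G)$ is equivalent (by the Sunada/Pesce-type criterion used by Gilkey and Wolf) to almost conjugacy of $\rho(G)$ and $\rho'(G)$ in $\Ot(q+1)$. Equivalently, for every $\gamma\in G$ the element $\rho(\gamma)$ is conjugate in $\Ot(q+1)$ to some $\rho'(\gamma')$, counted with multiplicity, i.e.\ the eigenvalue multisets match. Ordinary isospectrality is weaker. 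By Ikeda's generating function, it is equivalent to the equality
\[
\sum_{\gamma\in G}\frac{1-z^2}{\det(1-z\rho(\gamma))}=\sum_{\gamma\in G}\frac{1-z^2}{\det(1-z\rho'(\gamma))}.
\]
Since $\det(1-z\rho(\gamma))$ depends only on the eigenvalue multiset, this is the same as equality of the multisets $\{\operatorname{Spec}\rho(\gamma)\}$ after grouping by characteristic polynomial, but \emph{only through the sum of the rational functions}. Cancellations among different conjugacy classes of eigenvalue data are therefore allowed here, whereas strong isospectrality forbids them (for instance, by comparing $p$-form spectra via $\det$ weighted by exterior powers).

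To build examples, I would take a Type I metacyclic group $G=\Z_m\rtimes\Z_n$ with $\gcd(m,n)=1$, where the generator $B$ acts on $A$ by $A\mapsto A^r$ and $r$ has order $d$ modulo $m$. Its free orthogonal representations are induced from characters of $\langle A, B^d\rangle\cong\Z_{mn/d}$, so each element has an explicit eigenvalue list: elements $A^aB^{b}$ with $d\nmid b$ have determinants depending only on the order of $B^b$ and on the character chosen on $B^d$, while the elements of the cyclic subgroup $\langle A,B^d\rangle$ give lens-like contributions $\prod_j(1-z\xi^{k r^j}\eta^{\ell})$. I would choose two free representations $\rho,\rho'$, differing by the choices of $k,\ell$ (and possibly taken as sums of several irreducibles, so that they are reducible, escaping Wolf's irreducible rigidity), such that the contributions from the non-cyclic part coincide termwise. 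The problem then reduces to a lens-space-type identity on the cyclic subgroup. There I would import Ikeda's or the Lauret–Miatello–Rossetti isospectral but not $p$-isospectral lens-space families, lifted to $\langle A,B^d\rangle$ and made $B$-invariant, i.e.\ invariant under multiplication by $r$. The natural approach is to use the one-norm criterion of \cite{LMR-onenorm}: isospectrality corresponds to equality of one-norm counting functions of associated congruence lattices, while $p$-isospectrality needs finer equalities.

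Finally, I would show non-strong-isospectrality by exhibiting a degree $p$ for which the $p$-spectra differ, or more directly by an element $\gamma$ with no $\rho'$-partner with the same eigenvalues, so that almost conjugacy fails. I would also check that $\rho(G)$ and $\rho'(G)$ are not conjugate, which non-isometry requires anyway. Infinitely many pairs will come from letting $m$ (or a parameter in the LMR family) vary along an infinite sequence. The main obstacle is the middle step: arranging the lens-space type identity compatibly with the $r$-action, so that Ikeda's sum cancels globally but not element-by-element. I would first try a computer search over small $(m,n,r)$, for example $n=2$, $d=2$ and $m$ a prime with $r=-1$ (in fact dicyclic-like but coprime type), and then extract a parametric pattern for the eigenvalue multisets that I can prove in general.
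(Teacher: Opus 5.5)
Your reduction to the cyclic part is correct, and it is what the paper does for $\Gamma=\Gamma_2(m,4,m-1)$. Note that your suggested test case $n=2$, $d=2$, $r=-1$ is the dihedral group, which is not fixed point free by Remark~\ref{rem:fixedpointfreeTypeIgroups}; with $d=2$ one needs $4\mid n$. In the paper's group, the elements $A^aB$ and $A^aB^3$ contribute $(1+z^2)^{-2p}$ regardless of the $k_j$. The subgroup $\langle A,B^2\rangle$ contributes $\frac12F_{\widetilde L_\rho}(z)$, where $F_{\widetilde L_\rho}(z)=\frac12\bigl(F_{L_\rho}(z)+F_{L_\rho}(-z)\bigr)$ and $L_\rho=L(m;k_1,k_1,\dots,k_p,k_p)$.

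The genuine gap is that the step you call the main obstacle is the whole content of the theorem, and you do not supply it. You need infinitely many pairs of isospectral, non-isometric lens spaces of this constrained shape. Importing the Ikeda or Lauret--Miatello--Rossetti families does not give them. Since $\rho_k|_{\langle A\rangle}$ is conjugate to $R(k/m)\oplus R(k/m)$, every parameter must occur with even multiplicity. Nothing makes isospectrality of $L(m;s_1,\dots,s_h)$ and $L(m;s'_1,\dots,s'_h)$ pass to $L(m;s_1,s_1,\dots,s_h,s_h)$ and $L(m;s'_1,s'_1,\dots,s'_h,s'_h)$. The generating functions of the doubled spaces involve the factors $(1-z\zeta_m^{as_j})^{-2}(1-z\zeta_m^{-as_j})^{-2}$ rather than first powers. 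In the one-norm picture, the count for the doubled space depends on the full vector $(|w_1|,\dots,|w_h|)$ of each lattice point $w$, not only on $\|w\|_1$. A computer search can find isolated pairs, but it does not prove there are infinitely many.

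The missing idea is a complement trick for $m$ prime. Take $\{k_1,\dots,k_p\}=U_m\smallsetminus T$, where $U_m=\{1,\dots,\frac{m-1}{2}\}$ and $T=\{\alpha,\beta\}$. For $a\not\equiv0$ one has $\prod_{j\in U_m}(1-z\zeta_m^{aj})(1-z\zeta_m^{-aj})=\Phi_m(z)$. Clearing denominators, isospectrality of $L_\rho$ and $L_{\rho'}$ becomes equality of the polynomials $\sum_{a=1}^{m-1}\prod_{j\in T}(1-z\zeta_m^{aj})^2(1-z\zeta_m^{-aj})^2$. Their coefficients only depend on which sums $u\alpha+v\beta$ with $|u|,|v|\le2$ vanish mod $m$. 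Under $2\alpha\not\equiv\pm\beta$ and $\alpha\not\equiv\pm2\beta$ only $u=v=0$ does, so the sum is a universal polynomial depending only on $m$. The explicit choice $T=\{h-1,h\}$, $T'=\{h-2,h\}$ with $h=\frac{m-1}{2}$ then gives non-isometric $L_\rho,L_{\rho'}$ for every prime $m\ge13$.

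Your non-strong-isospectrality step is also left open. Exhibiting a degree with different $p$-spectra may be impossible, since $p$-isospectrality for all $p$ does not imply strong isospectrality. The paper argues instead that almost conjugacy preserves element orders, so it restricts to the odd-order elements $\rho(\langle A\rangle)$ and $\rho'(\langle A\rangle)$. Almost conjugate cyclic subgroups are conjugate, so strong isospectrality would force $L_\rho\cong L_{\rho'}$.
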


The article is organized as follows. 
Section~2 recalls the necessary background on spherical space forms, their spectra, lens spaces, and fixed point free groups of Type~I. It also presents explicit criteria, due to Wolf, for isometry (Theorem~\ref{thm:caracterizacionTypeIsphericalspaceforms}) and strong isospectrality (Theorem~\ref{thm:Wolf-strong-isospectrality}). In particular, the latter provides an explicit formulation of Wolf's strong isospectrality criterion for Type~I groups that is not stated in this form in \cite{Wolf01}.
In Section~3, we focus on spherical space forms with a particular non-cyclic fundamental group of Type~I and derive associated lens spaces and their spectral generating functions. 
We then establish a general criterion to produce isospectral spherical space forms that are not strongly isospectral and conclude with the explicit infinite family proving Theorem~\ref{thm:introduction-main}.

\subsection*{Acknowledgments}

The author would like to thank his Ph.D.\ advisor, Emilio Lauret, for his guidance and support throughout this work.

\section{Preliminaries}\label{sec:preliminaries}
	
In this section we recall the necessary background on spherical space forms with fundamental group of Type~I, the group representations that define them and their spectra. We refer the reader to \cite{Wolf-book} for a comprehensive treatment and to \cite{Wolf01} for a summarized version (see also \cite[\S2]{LMR-SaoPaulo}).

	\subsection{Spherical space forms and their spectra}\label{subsec:sphericalspaceforms}
	A \emph{spherical space form} is a complete Riemannian manifold of constant positive sectional curvature, which we normalize to be equal to~$1$. By the Killing--Hopf theorem, any such manifold is isometric to a quotient $S^q/G$, where $S^q$ is the unit sphere in $\R^{q+1}$ and $G$ is a finite subgroup of $\Ot(q+1)$ acting freely on $S^q$. The latter condition means that $1$ is not an eigenvalue of $g$ for any $g \in G \smallsetminus \{\Id\}$, or equivalently, that $g$ has no fixed points on $S^q$. The fundamental group of $S^q/G$ is isomorphic to $G$.
	Two spherical space forms $S^q/G_1$ and $S^q/G_2$ are \emph{isometric} if and only if $G_1$ is conjugate to $G_2$ in $\Ot(q+1)$.
	
	A finite group $\Gamma$ is called \emph{fixed point free} if it admits a \emph{fixed point free representation}, that is, a faithful representation $\rho\colon \Gamma \to \Ot(q+1)$ such that $\rho(\gamma)$ has no eigenvalue equal to~$1$ for any $\gamma \in \Gamma \setminus \{e\}$. In this case, $S^q/\rho(\Gamma)$ is a spherical space form with fundamental group isomorphic to~$\Gamma$;
	$S^q/\rho(\Gamma)$ is called \emph{irreducible} if $\rho$ is irreducible. 
	Two $q$-dimensional spherical space forms $S^q/\rho_1(\Gamma)$ and $S^q/\rho_2(\Gamma)$ are isometric if and only if there exists an automorphism $\psi \in \Aut(\Gamma)$ such that $\rho_1 \simeq \rho_2 \circ \psi$ (equivalence of representations).
	
	Wolf gave a complete classification of spherical space forms, organized in two stages. 
	The first determines the fixed point free finite groups.
	The second classifies, for each such group, its fixed point free real representations up to equivalence and the action induced by group automorphisms; see \cite[Chapters 5--7]{Wolf-book}.
	
	We next recall the spectrum of a spherical space form \(S^q/G\).  Let \(\mathcal{H}_k\) be the space
	of harmonic homogeneous polynomials of degree $k$ on $\R^{q+1}$. The group $G$ acts on $\mathcal{H}_k$ by
$
		(g\cdot P)(x)=P(g^{-1}x).
$
	We denote by $\mathcal{H}_k^{G}$ the subspace of $G$-invariant elements in $\mathcal H_k$. 
The spectrum of the Laplace-Beltrami operator on $S^q/G$ is given by the eigenvalues $k(k+q-1)$ with multiplicity $\dim\mathcal H_k^G$, for any non-negative integer $k$. 
See for instance \cite[\S2]{LMR-SaoPaulo} for more details.
Thus the spectrum is encoded by the generating function
	\begin{equation}
		F_G(z):=\sum_{k\geq 0}\dim(\mathcal{H}_k^G)\, z^k.
	\end{equation}
Sometimes, we will write $F_{S^q/G}(z)$ instead of $F_G(z)$, referring to the quotient manifold.

Using Molien's formula, one obtains (see \cite{Ikeda80_3-dimI} or \cite[Prop.~2.5]{Wolf01})
	\begin{equation}\label{eq:Ikeda-function}
		F_G(z)=\frac{1-z^2}{|G|} \sum_{g\in G}\frac{1}{\det(\Id-zg)}.
	\end{equation}
	Consequently,
two spherical space forms \(S^q/G_1\) and \(S^q/G_2\) are
isospectral (i.e.\ the spectra of their Laplace-Beltrami operators coincide) if and only if \(F_{G_1}(z)=F_{G_2}(z)\).

For any integer $p$ with $0\leq p\leq q$, two closed Riemannian manifolds are \emph{$p$-isospectral} if their Hodge-Laplace operators on $p$-forms have the same spectrum, including multiplicities.  In particular, $0$-isospectrality is ordinary isospectrality.
More generally, they are \emph{strongly isospectral} if the corresponding natural elliptic differential operators on every natural vector bundle have the same spectra.
Strong isospectrality therefore implies $p$-isospectrality for every $p$. 

The next result characterizes strongly isospectral spherical space forms. 
The converse direction follows using the generalized Sunada Theorem (see e.g.\ \cite[Prop.~2.10]{Wolf01}), and the forward direction was established by Pesce~\cite[Prop.~III.1]{Pesce95}.

\begin{theorem}\label{thm:stronglyisospectrality}
Given two spherical space forms $S^q/G_1$ and $S^q/G_2$, they are strongly isospectral if and only if $G_1$ and $G_2$ are almost conjugate in $\Ot(q+1)$. 
\end{theorem}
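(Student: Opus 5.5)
Here $G_1$ and $G_2$ are called \emph{almost conjugate} in $\Ot(q+1)$ if there is a bijection $G_1\to G_2$ sending each element to an $\Ot(q+1)$-conjugate of itself. Equivalently, for every conjugacy class $C$ of $\Ot(q+1)$ we have $|C\cap G_1|=|C\cap G_2|$.

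For the converse direction, assume $G_1,G_2$ are almost conjugate in $\Ot(q+1)$. I would write $S^q=\Ot(q+1)/\Ot(q)$, so that $S^q/G_i=G_i\backslash \Ot(q+1)/\Ot(q)$, with the round metric coming from a bi-invariant metric on $\Ot(q+1)$. The generalized Sunada theorem (in Pesce's and DeTurck–Gordon's form, as in \cite[Prop.~2.10]{Wolf01}) says the following: if $\Gamma_1,\Gamma_2$ are finite subgroups of a compact Lie group $H$ acting by isometries on a Riemannian manifold $M$, and they are almost conjugate in $H$, then $M/\Gamma_1$ and $M/\Gamma_2$ are strongly isospectral.

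The mechanism is the following. Any natural operator $D$ on a natural bundle $E\to S^q$ commutes with the $\Ot(q+1)$-action. Each eigenspace $V_\lambda$ is therefore a finite-dimensional $\Ot(q+1)$-representation, and the corresponding multiplicity on $S^q/G_i$ is $\dim V_\lambda^{G_i}=\frac1{|G_i|}\sum_{g\in G_i}\chi_{V_\lambda}(g)$. Because $\chi_{V_\lambda}$ is a class function on $\Ot(q+1)$, almost conjugacy makes these two averages agree. We also have $|G_1|=|G_2|$, since the bijection counts elements. The point to check is that natural operators on $S^q/G_i$ lift to $\Ot(q+1)$-equivariant operators on $S^q$, with $G_i$-invariant sections corresponding to sections on the quotient. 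This is standard for natural bundles, because isometries act on them.

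For the forward direction I would follow Pesce. Suppose the quotients are strongly isospectral. I would use a family of natural bundles rich enough that the invariant dimensions $\dim V^{G_i}$ are determined for every irreducible representation $V$ of $\Ot(q+1)$. Examples are the bundles of tensors of all types, $\Ot(q)$-homogeneous bundles $\Ot(q+1)\times_{\Ot(q)}W$, and the rough Laplacian, or the Casimir operator of $\Ot(q+1)$ acting on their sections. By Frobenius reciprocity, $L^2$ sections of the homogeneous bundle attached to $W$ decompose as $\bigoplus_V V\otimes\operatorname{Hom}_{\Ot(q)}(V,W)$. Every irreducible $V$ of $\Ot(q+1)$ occurs here for some $W$, for instance any $W$ in the restriction $V|_{\Ot(q)}$.

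Equal spectra of the Casimir operators on all these bundles then give equality of the sums $\sum_V m_W(V)\dim V^{G_i}$ over each Casimir eigenvalue. Separating the individual $V$ is the main obstacle, since several irreducibles can share a Casimir eigenvalue. I would handle it by induction on highest weight, varying $W$ so that the resulting linear system is triangular. Once $\dim V^{G_1}=\dim V^{G_2}$ holds for all irreducible $V$, we get $\frac1{|G_1|}\sum_{G_1}\chi_V=\frac1{|G_2|}\sum_{G_2}\chi_V$ for all $V$. Irreducible characters separate the conjugacy classes of the compact group $\Ot(q+1)$, and the finite sets $G_1$ and $G_2$ meet only finitely many classes. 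Hence the class-counting measures coincide, and $|G_1|=|G_2|$ follows from $V$ trivial together with equal volumes, which are spectral invariants. This is exactly almost conjugacy.
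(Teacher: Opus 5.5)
Your converse direction matches the paper, which simply invokes the generalized Sunada theorem (Wolf's Prop.~2.10); your averaging of $\chi_{V_\lambda}$ over $G_i$ is exactly its mechanism. For the forward direction the paper gives no argument, only a citation of Pesce's Prop.~III.1. What you write is therefore a genuinely different, self-contained route, and it is correct in outline. Rough Laplacians on the natural bundles $\Ot(q+1)\times_{\Ot(q)}W$, together with Frobenius reciprocity, give $\sum_{c(V)=c}[V|_{\Ot(q)}:W]\,(\dim V^{G_1}-\dim V^{G_2})=0$ for every $W$ and every Casimir value $c$. Peter--Weyl then converts representation equivalence into almost conjugacy.

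This route buys transparency: one natural operator per bundle suffices, and one sees why the statement must be about $\Ot(q+1)$. For $SO(2n)\supset SO(2n-1)$, the weights $\lambda$ and $(\lambda_1,\dots,\lambda_{n-1},-\lambda_n)$ have the same Casimir and identical restrictions to $SO(2n-1)$. They only fuse into a single irreducible of $\Ot(2n)$. The paper's citation buys brevity and a complete proof by reference.

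The price of your route is that all its content sits in one claim you only assert. For fixed $c$, the restrictions to $\Ot(q)$ of the irreducibles of $\Ot(q+1)$ with Casimir $c$ must be linearly independent. This is true, but not because a single $K$-type isolates each $V$. In $\Ot(4)$, the irreducibles with $SO(4)$-weights $(4,0)$ and $(3,\pm3)$ both have Casimir $24$. The latter restricts to the $7$-dimensional $SO(3)$-type with $-\Id_3$ acting by $+1$ and by $-1$, and each of these occurs in one of the two $\det$-twists of the former. So a genuinely triangular system is needed.

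An ordering that works for $q=2n-1$ is componentwise on $\bar\lambda=(\lambda_1,\dots,\lambda_{n-1})$. This is the only nontrivial case, since for $q$ even only $\{\Id\}$ and $\{\pm\Id\}$ act freely.
\begin{itemize}
\item At fixed $c=\sum_{i<n}\lambda_i(\lambda_i+2n-2i)+\lambda_n^2$, the truncation $\bar\lambda$ determines $|\lambda_n|$.
\item By interlacing, the $SO(2n-1)$-type $\bar\lambda$ lies inside $V'$ only if $\bar\lambda'\ge\bar\lambda$ componentwise.
\item When $\lambda_n=0$, the sign of $-\Id_{2n-1}$ on that type separates $V$ from $V\otimes\det$.
\end{itemize}
Downward induction on this finite poset then gives the triangular system you want.

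There is also a small slip at the end. The trivial $V$ gives $1=1$ and says nothing about $|G_i|$. The equality $|G_1|=|G_2|$ follows from equal volumes (as you also say), or from the atom at the identity class of the normalized counting measures.
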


	\subsection{Type~I groups}\label{subsec:TypeIgroups}
	
	Following \cite{Wolf-book}, a finite group is of \emph{Type~I} if all its Sylow subgroups are cyclic. By a theorem of Burnside (see \cite[Thm.~5.4.1]{Wolf-book}), every fixed point free 
	Type~I group is described as follows.
	
	\begin{lemma}[{\cite[Thm.~5.4.1]{Wolf-book}}]\label{lem:typeI}
		Let $m$, $n$, $r$ be positive integers satisfying $\gcd((r-1)n,m)=1$ and $r^n \equiv 1 \pmod{m}$. Then the group $\Gamma$ generated by elements $A$, $B$ with relations
		\begin{equation}\label{eq:relations}
			A^m=B^n=1 \quad \text{and} \quad BAB^{-1}=A^r
		\end{equation}
		has order $mn$ and is of Type~I. Conversely, every Type~I group is isomorphic to one of this form.
	\end{lemma}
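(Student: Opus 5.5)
The forward direction is a direct construction. The converse is the Burnside--H\"older--Zassenhaus structure theorem for groups all of whose Sylow subgroups are cyclic, which I would prove by transfer and induction. Note first that $\gcd((r-1)n,m)=1$ forces $\gcd(n,m)=1$ and $\gcd(r-1,m)=1$. Also, $r^n\equiv 1 \pmod m$ forces $\gcd(r,m)=1$.

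For the forward direction, multiplication by $r$ is therefore an automorphism $\alpha$ of $\Z/m\Z$ with $\alpha^n=\Id$. So the semidirect product $\Z/m\Z\rtimes_\alpha \Z/n\Z$ exists, has order $mn$, and satisfies the relations \eqref{eq:relations}. Hence $\Gamma$ surjects onto it and $|\Gamma|\ge mn$. Conversely, the relation $BAB^{-1}=A^r$ lets one rewrite every word in $A,B$ as $A^aB^b$ with $0\le a<m$ and $0\le b<n$, so $|\Gamma|\le mn$. Thus $|\Gamma|=mn$.

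Since $\gcd(m,n)=1$, each prime $p$ divides exactly one of $m,n$. If $p\mid m$, the Sylow $p$-subgroup of the cyclic normal subgroup $\langle A\rangle$ has full $p$-power order. If $p\mid n$, then $\langle B\rangle$ has order $n$, is a Hall subgroup, and contains a Sylow $p$-subgroup of $\Gamma$. In both cases the Sylow subgroups are cyclic, so $\Gamma$ is of Type~I.

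For the converse, let $G$ have all Sylow subgroups cyclic. The first step is that $G$ is solvable and $G'$ is a cyclic Hall subgroup with $G/G'$ cyclic.
\begin{itemize}
\item Let $p$ be the smallest prime dividing $|G|$ and $P$ a cyclic Sylow $p$-subgroup. Then $N_G(P)/C_G(P)$ embeds in $\Aut(P)$, whose order is $p^{k-1}(p-1)$. It is also a $p'$-group, because $P\le C_G(P)$. All its prime divisors are therefore primes dividing $|G|$ that are less than $p$, so it is trivial.
\item Burnside's transfer theorem then gives a normal $p$-complement. Induction on $|G|$ yields a Sylow tower, so $G$ is solvable.
\item Next, $G'$ is nilpotent: the Fitting subgroup $F$ is a product of cyclic groups, hence cyclic, so $G/C_G(F)$ is abelian. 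Solvability gives $C_G(F)\le F$, hence $G'\le F$. Thus $G'$ is a nilpotent group with cyclic Sylow subgroups, so it is cyclic.
\item Finally, $G'$ is a Hall subgroup. For each prime $p$, a Sylow $p$-subgroup of $G$ is cyclic with a cyclic quotient, and transfer shows that $G'\cap P$ is either trivial or all of $P$.
\end{itemize}

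Granting this, let $A$ generate $G'$, of order $m$, and set $n=|G/G'|$, so $\gcd(m,n)=1$. By Schur--Zassenhaus, $G'$ has a complement $H\cong G/G'$, which is cyclic; let $B$ generate $H$. Then $G=\langle A,B\rangle$ has order $mn$. Since $\langle A\rangle$ is normal, $BAB^{-1}=A^r$ for some $r$ with $r^n\equiv 1\pmod m$.

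It remains to show $\gcd(r-1,m)=1$. The subgroup $\langle A^{r-1}\rangle$ is normal, because it is characteristic in the normal cyclic subgroup $\langle A\rangle$. The quotient of $G$ by it is abelian, since there $B$ commutes with $A$. Hence $G'\le\langle A^{r-1}\rangle$, so $\langle A^{r-1}\rangle=\langle A\rangle$ and $\gcd(r-1,m)=1$. Together with $\gcd(n,m)=1$ this gives $\gcd((r-1)n,m)=1$, so $G$ is a quotient of the group $\Gamma$ of Lemma~\ref{lem:typeI}. Both have order $mn$, so $G\cong\Gamma$.

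The main obstacle is the structural step, specifically the proof that $G'$ is a Hall subgroup. The approach to try first is to combine Burnside's normal $p$-complement theorem with the observation that a cyclic Sylow $p$-subgroup $P$ satisfies either $P\le Z(N_G(P))$ or $P\cap G'=P$. Everything else is routine bookkeeping with the relations.
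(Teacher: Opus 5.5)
Your proof is correct. The paper has no proof of this lemma to compare routes with: it quotes the statement from Wolf's book (Thm.~5.4.1), attributes it to Burnside, and proves nothing in the text.

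What you give is a self-contained version of the classical H\"older--Burnside--Zassenhaus theorem. The forward direction uses the semidirect product for the lower bound, the normal form $A^aB^b$ for the upper bound, and the Hall subgroups $\langle A\rangle,\langle B\rangle$ for the Sylow condition. The converse uses Burnside's normal $p$-complement theorem, $C_G(F(G))\le F(G)$ for solvable $G$, and Schur--Zassenhaus; only its abelian-kernel case is needed, since $G'$ is cyclic.

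The citation buys brevity and keeps this lemma in the same source as the fixed-point-free criterion and the representation theory used afterwards. Your argument buys transparency. For instance, it shows the forward direction uses only $\gcd(m,n)=1$ and $r^n\equiv 1\pmod m$. So $\gcd(r-1,m)=1$ is purely a normalization forcing $\langle A\rangle=\Gamma'$, which is exactly what your last step recovers.

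Two assertions should be written out rather than stated:
\begin{enumerate}
\item That $G/G'$ is cyclic. It is abelian, and its Sylow subgroups are images of the cyclic Sylow subgroups of $G$.
\item Your dichotomy for the Hall property. If $x\in N_G(P)\smallsetminus C_G(P)$, then $x$ acts on $P=\langle g\rangle$ by $g\mapsto g^s$, an automorphism of $p'$-order. This forces $p$ odd, since $\Aut(P)$ is a $2$-group when $p=2$. It also forces $s\not\equiv 1\pmod p$, since the $p'$-elements of $(\Z/p^k\Z)^\times$ inject into $(\Z/p\Z)^\times$. Hence $[x,g]=g^{s-1}$ generates $P$, so $P\le G'$. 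Otherwise $P\le Z(N_G(P))$, and Burnside gives a normal $p$-complement containing $G'$, so $G'\cap P=1$. Since $G'\cap P$ is a Sylow $p$-subgroup of $G'$, this yields $\gcd(|G'|,|G:G'|)=1$.
\end{enumerate}
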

	
We denote by $\Gamma_d(m,n,r)$ the Type~I group described in Lemma~\ref{lem:typeI}, where $d$ denotes the order of $r$ in $\mathbb{Z}_m^\times$. (In this paper, we abbreviate $\Z_m=\Z/m\Z$ as ring and $\Z_m^\times$ means the group of invertible elements in $\Z_m$.) 
Every element of $\Gamma_d(m,n,r)$ can be written uniquely as $A^aB^b$, with $0 \leq a < m$ and $0 \leq b < n$; in particular,
$
|\Gamma_d(m,n,r)|=mn.
$
Moreover, $m$ is necessarily odd, since $\gcd(m,r-1)=1$, and $\Gamma_d(m,n,r)$ is cyclic if and only if $d=1$.

\begin{remark}\label{rem:fixedpointfreeTypeIgroups}
Such a group is fixed point free if and only if every prime divisor of $d$ also divides $n/d$ (see \cite[Thm.~5.4.1]{Wolf-book})
\end{remark}

	The irreducible complex fixed point free representations of a Type~I group 
	$\Gamma_d(m,n,r)$ are described as follows (see \cite[Thm.~5.5.6]{Wolf-book}).
	
	\begin{theorem} 
		\label{thm:irreps}
		Let $\Gamma_d(m,n,r)$ be a Type~I group with generators $A$, $B$ satisfying 
		\eqref{eq:relations}. For integers $k$, $\ell$ with $\gcd(k,m)=1=\gcd(\ell,n)$, let $\pi_{k,\ell}$ be the complex representation of degree $d$ defined by
		\begin{equation}\label{eq:complex_irrep}
			\pi_{k,\ell}(A) = \diag\!\left(e^{2\pi ik/m},\, e^{2\pi ikr/m},\, \ldots,\, 
			e^{2\pi i kr^{d-1}/m}\right), \quad
			\pi_{k,\ell}(B) = \begin{pmatrix} 0 & \Id_{d-1} \\ e^{2\pi i \ell d/n} & 0 \end{pmatrix}.
		\end{equation}
Then $\pi_{k,\ell}$ is an irreducible fixed point free complex representation, and $\pi_{k,\ell} \simeq \pi_{k',\ell'}$ if and only if $\ell \equiv \ell' \pmod{n/d}$ and $k \equiv k' r^c \pmod{m}$ for some integer $c$. 
Moreover, every irreducible fixed point free complex representation of $\Gamma_d(m,n,r)$ is equivalent to $\pi_{k,\ell}$ for some $k,\ell$. 
\end{theorem}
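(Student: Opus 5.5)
The most direct route is Clifford--Mackey theory applied to the normal cyclic subgroup $H=\langle A, B^d\rangle$. I will show that $\pi_{k,\ell}\simeq\operatorname{Ind}_H^\Gamma\chi$ for a suitable character $\chi$ of $H$, and then deduce all three assertions from this. Throughout, write $\Gamma=\Gamma_d(m,n,r)$ and $\zeta_m=e^{2\pi i/m}$. Since $r^d\equiv1\pmod m$, the element $B^d$ commutes with $A$. Hence $H$ is abelian of order $m\cdot n/d$, it has index $d$, and it is normal because $BAB^{-1}=A^r\in H$.

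The first step is to identify $\pi_{k,\ell}$ as an induced representation. I check directly from \eqref{eq:complex_irrep} that $\pi_{k,\ell}(B)$ is the permutation-type matrix with $\pi_{k,\ell}(B)^d=e^{2\pi i\ell d/n}\Id_d$. The homomorphism property then reduces to two checks on the defining relations \eqref{eq:relations}. The relation $BAB^{-1}=A^r$ holds because conjugation by $\pi_{k,\ell}(B)$ cyclically shifts the diagonal entries $\zeta_m^{kr^j}$, and $r^d\equiv 1\pmod m$ closes the cycle. The relation $B^n=1$ holds because $(e^{2\pi i\ell d/n})^{n/d}=1$. Restricted to $H$, $\pi_{k,\ell}$ is the direct sum of the characters $\chi_j\colon A\mapsto\zeta_m^{kr^j}$, $B^d\mapsto e^{2\pi i\ell d/n}$ for $0\le j<d$. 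These are the $\Gamma/H$-conjugates of $\chi_0$, and they are pairwise distinct because $\gcd(k,m)=1$ and $r$ has order exactly $d$. Mackey's criterion then gives irreducibility and $\pi_{k,\ell}\simeq\operatorname{Ind}_H^\Gamma\chi_0$.

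The second step is the equivalence criterion. Two induced representations from distinct-orbit characters are equivalent if and only if the characters lie in the same $\Gamma$-orbit. This orbit condition reads $k'\equiv kr^c\pmod m$ together with $e^{2\pi i\ell d/n}=e^{2\pi i\ell' d/n}$, which is exactly $\ell\equiv\ell'\pmod{n/d}$.

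The third step is fixed point freeness, and I expect it to be the main obstacle. Take $g=A^aB^b\neq e$.
\begin{itemize}
\item If $d\nmid b$, then $\pi(g)$ is a monomial matrix whose permutation is a product of cycles of length $d/\gcd(b,d)>1$. An eigenvalue $1$ would force the product of entries along some cycle to equal $1$. I will compute this product as a power $g^{d/\gcd(b,d)}$ lying in $H$ and reduce to the case $b\in d\Z$, i.e. to an element of $H$.
\item If $g\in H$, say $g=A^aB^{b}$ with $d\mid b$, then $\pi(g)$ is diagonal with entries $\zeta_m^{akr^j}e^{2\pi i\ell b/n}$. Since $m$ is coprime to $n$, such an entry equals $1$ only if $m\mid a$ and $(n/d)\mid \ell b/d$, which gives $g=e$ because $\gcd(\ell,n)=1$.
\end{itemize}
The delicate part is the first case. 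There I use the hypothesis of Remark~\ref{rem:fixedpointfreeTypeIgroups}, that every prime divisor of $d$ divides $n/d$, to guarantee that the relevant power $g^{d/\gcd(b,d)}$ is a nontrivial element of $H$. This is where the condition genuinely enters.

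Finally, for completeness: every irreducible fixed point free representation $\rho$ restricts to $H$ as a sum of characters. Faithfulness of $\rho$ on the cyclic Sylow parts forces these characters to be faithful on $\langle A\rangle$ and on $\langle B^d\rangle$ modulo the appropriate kernel. This gives the parameters $k,\ell$ coprime to $m,n$, and Frobenius reciprocity yields $\rho\simeq\pi_{k,\ell}$. Since this statement is quoted from \cite[Thm.~5.5.6]{Wolf-book}, I would present the above as a verification sketch and refer to Wolf for the full details of this last part.
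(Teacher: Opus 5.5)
The paper does not prove this statement; it quotes it from Wolf's book (Thm.~5.5.6), so the only comparison is with a citation. Your route is a correct, self-contained proof. You realize $\pi_{k,\ell}$ as $\operatorname{Ind}_H^\Gamma\chi_0$ for the cyclic normal subgroup $H=\langle A,B^d\rangle$ of index $d$. Irreducibility and the equivalence criterion then come from the $\Gamma$-orbit of $\chi_0$ having length $d$. What this buys over the citation is that it exposes a hypothesis the paper's wording omits. Read literally (``a Type~I group''), the fixed point free claim is false unless every prime divisor of $d$ divides $n/d$. For the dihedral group $\Gamma_2(m,2,m-1)$ one gets $\pi_{k,1}(B)=\left(\begin{smallmatrix}0&1\\1&0\end{smallmatrix}\right)$, which has eigenvalue $1$. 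You correctly flagged that this condition must enter.

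Two steps are only sketched and need one line each.

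\textbf{The case $d\nmid b$.} Put $L=d/\gcd(b,d)$, so $g^L=A^{a'}B^{\operatorname{lcm}(b,d)}\in H$. Choose a prime $q$ with $v_q(b)<v_q(d)$. Since $q\mid d$, the hypothesis gives $q\mid n/d$, hence $v_q(\operatorname{lcm}(b,d))=v_q(d)<v_q(n)$. Therefore $B^{\operatorname{lcm}(b,d)}\neq 1$ and $g^L\neq e$. A vector fixed by $\pi(g)$ is also fixed by $\pi(g^L)=\pi(g)^L$, so your case $g\in H$ already finishes the argument. The cycle-product bookkeeping is unnecessary.

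\textbf{The exhaustion step.} Fixed point freeness of $\rho$ directly makes every constituent character of $\rho|_H$ faithful on all of $H$, so the phrase about an ``appropriate kernel'' can be dropped. This gives $\gcd(k,m)=1$, but only $\gcd(\ell,n/d)=1$. You still have to choose a representative of $\ell$ modulo $n/d$ that is coprime to $n$. This is possible because $\Z_n^\times\to\Z_{n/d}^\times$ is surjective, or because under the hypothesis above $n$ and $n/d$ have the same prime divisors. Frobenius reciprocity then gives $\rho\simeq\pi_{k,\ell}$. With these additions you do not need to defer the last part to Wolf.
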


As a direct consequence of Theorem~\ref{thm:irreps}, every real fixed point free representation $\rho$ of
$\Gamma_d(m,n,r)$ is equivalent to a direct sum of the form
\begin{equation}
	\rho \simeq
	\bigoplus_{j=1}^{p}
	\left(
	\pi_{k_j,\ell_j}
	\oplus
	\overline{\pi}_{k_j,\ell_j}
	\right),
\end{equation}
for some integers $k_1,\ell_1,\dots,k_p,\ell_p$ satisfying $\gcd(k_j,m)=1=\gcd(\ell_j,n)$ for every $j$.
Each irreducible real summand $\pi_{k_j,\ell_j} \oplus \overline{\pi}_{k_j,\ell_j}$ has degree $2d$, and hence $\rho$ has
degree $2dp$.

Combining Theorem \ref{thm:irreps} and the preceding description with
the characterization of isometric spherical space forms with a given
fundamental group recalled in Subsection~\ref{subsec:sphericalspaceforms}, we obtain the following statement, which appears in the first paragraph of \cite[\S5.6]{Wolf-book}.

\begin{theorem}[Characterization of isometric spherical space forms of Type I]
\label{thm:caracterizacionTypeIsphericalspaceforms}
		Let $\Gamma=\Gamma_d(m,n,r)$ be a Type I group, $p$ a positive integer, and
		\begin{equation*}
			\rho= \bigoplus_{j=1}^{p} \left(\pi_{k_j,\ell_j}\oplus \overline{\pi}_{k_j,\ell_j} \right)
\qquad\text{and}\qquad
			\rho'= \bigoplus_{j=1}^{p} \left(\pi_{k'_j,\ell'_j}\oplus \overline{\pi}_{k'_j,\ell'_j} \right)
		\end{equation*}
two fixed point free real representations of $\Gamma$. Then the spherical space forms $S^{2dp-1}/\rho(\Gamma)$ and $S^{2dp-1}/\rho'(\Gamma)$ are isometric if and only if there exist integers $a$ and $b$ with 
\begin{align}\label{eq:conditions-a-b}
\gcd(a,m)&=1, &
\gcd(b,n)&=1, &
b&\equiv 1\pmod d,
\end{align}
a permutation $\sigma$ of $\{1,\ldots,p\}$,  $\epsilon_1,\dots,\epsilon_p\in\{-1,1\}$, and $c_1,\dots,c_p\in\{0,\ldots,d-1\}$ such that 
		\begin{align*}
			k_{\sigma(j)}' &\equiv \epsilon_j ak_{j}r^{c_j} \pmod m,&
			\ell_{\sigma(j)}' &\equiv \epsilon_j b\ell_{j} \pmod {n/d},
			\qquad 
			\text{for all $1\leq j\leq p$}.
		\end{align*}
		
\end{theorem}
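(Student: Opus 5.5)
The plan is to reduce the statement to the two facts recalled in Subsection~\ref{subsec:sphericalspaceforms}. First, $S^{2dp-1}/\rho(\Gamma)$ and $S^{2dp-1}/\rho'(\Gamma)$ are isometric if and only if $\rho'\simeq\rho\circ\psi$ for some $\psi\in\Aut(\Gamma)$. I would prove a slight strengthening: isometry holds if and only if $\rho'\simeq\rho^{\,}\!\circ\psi$ up to replacing summands by their conjugates. This is harmless, because each real summand $\pi_{k,\ell}\oplus\overline{\pi}_{k,\ell}$ is invariant under swapping $\pi$ with $\overline\pi$. Using Theorem~\ref{thm:irreps}, I would then compute how each $\psi$ acts on the parameters $(k,\ell)$, and translate the equivalence of real representations into matching the multisets of classes $\{[k_j,\ell_j]\}$.

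The first step is to describe $\Aut(\Gamma)$ sufficiently well. The subgroup $\langle A\rangle$ is characteristic: it is the unique normal subgroup of order $m$, being a normal Hall subgroup (with $m$ odd and $\gcd(m,n)=1$). Hence any automorphism satisfies $\psi(A)=A^a$ with $\gcd(a,m)=1$ and $\psi(B)=A^sB^b$. Modulo $\langle A\rangle$, $B^b$ must still conjugate $A$ by $r$, that is $r^b\equiv r$, so $b\equiv1\pmod d$, and $\gcd(b,n)=1$ is needed for bijectivity. Conversely, each such pair $(a,b)$ defines an automorphism $A\mapsto A^a$, $B\mapsto B^b$, since the relations \eqref{eq:relations} are preserved.

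Next, compute $\pi_{k,\ell}\circ\psi$. Its restriction to $A$ is $\pi_{ak,\cdot}(A)$. For $B$, note that $(\pi(B^b))^d$ is the scalar $e^{2\pi i\ell b d/n}$, since $\pi(B)^d=e^{2\pi i\ell d/n}\Id$. An irreducible representation with given $A$-weights is determined by the scalar $\pi(B^d)$, so $\pi_{k,\ell}\circ\psi\simeq\pi_{ak,b\ell}$. The inner-automorphism factor $A^s$ does not change the equivalence class. One must check that $B^b$ with $b\equiv1\pmod d$ yields the same $A$-weight ordering up to equivalence; this follows from the classification in Theorem~\ref{thm:irreps}, which detects the class only through $k$ modulo multiplication by powers of $r$ and through $\ell$ modulo $n/d$.

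Finally, note that $\overline{\pi}_{k,\ell}\simeq\pi_{-k,-\ell}$, and that complex representations decompose uniquely into irreducibles. Then $\rho'\simeq\rho\circ\psi$ holds exactly when the multisets $\{\pi_{k'_j,\ell'_j},\pi_{-k'_j,-\ell'_j}\}$ and $\{\pi_{ak_j,b\ell_j},\pi_{-ak_j,-b\ell_j}\}$ agree. This matching yields the permutation $\sigma$, the signs $\epsilon_j$, and the exponents $c_j$ in the statement.

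I expect the main obstacle to be the verification that $\pi_{k,\ell}\circ\psi\simeq\pi_{ak,b\ell}$, namely tracking $\ell$ modulo $n/d$ correctly when $b\equiv 1\pmod d$. I would handle it by computing characters. On elements $A^xB^y$ with $d\nmid y$, the character of $\pi_{k,\ell}$ vanishes, because $\pi(B)^y$ permutes the basis vectors without fixed indices. On $A^xB^{dt}$, the character equals $e^{2\pi i \ell dt/n}\sum_c e^{2\pi i kxr^c/m}$. Comparing these values directly gives the claimed equivalence.
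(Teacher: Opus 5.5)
Your proposal is correct and follows the route the paper indicates: the paper obtains this statement by combining the criterion ``isometric iff $\rho'\simeq\rho\circ\psi$ for some $\psi\in\Aut(\Gamma)$'' with Theorem~\ref{thm:irreps}, and defers the details to Wolf's book. Your description of $\Aut(\Gamma)$ ($A\mapsto A^a$, $B\mapsto A^sB^b$ with $b\equiv 1\pmod d$, where the $A^s$ factor is inner because $\gcd(r-1,m)=1$), your character check $\pi_{k,\ell}\circ\psi\simeq\pi_{ak,b\ell}$, and $\overline{\pi}_{k,\ell}\simeq\pi_{-k,-\ell}$ supply those details; the one step worth stating explicitly is that the multiset of pairs $\{\pi_{k_j,\ell_j},\overline{\pi}_{k_j,\ell_j}\}$, and hence $\sigma$ and the $\epsilon_j$, is still recovered from the constituents when $\pi_{k,\ell}\simeq\overline{\pi}_{k,\ell}$, as happens for $\Gamma_2(m,4,m-1)$.
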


	\subsection{Isospectral spherical space forms of Type I}\label{subsec:wolf_typeI}
	
	In \cite{Wolf01}, Wolf obtained sufficient conditions to produce strongly isospectral spherical space forms with isomorphic fundamental groups of any type. 
	The next statement, focused in spherical space forms of Type I, follows immediately from Proposition 6.3 and Theorem 6.4 in \cite{Wolf01}.

\begin{theorem}[Wolf] 
\label{thm:Wolf-strong-isospectrality}
Let $\Gamma=\Gamma_d(m,n,r)$ be a Type I group, $p$ a positive integer, and
\begin{equation*}
	\rho= \bigoplus_{j=1}^{p} \left(\pi_{k_j,\ell_j}\oplus \overline{\pi}_{k_j,\ell_j} \right)
	\qquad\text{and}\qquad
	\rho'= \bigoplus_{j=1}^{p} \left(\pi_{k'_j,\ell'_j}\oplus \overline{\pi}_{k'_j,\ell'_j} \right)
\end{equation*}
two fixed point free real representations of $\Gamma$. 
Suppose that there exist integers $a$ and $b$ with $\gcd(a,m)=1$, $\gcd(b,n)=1$, a permutation $\sigma$ of $\{1,\ldots,p\}$,  $\epsilon_1,\dots,\epsilon_p\in\{-1,1\}$, and integers $c_1,\dots,c_p\in\{0,\ldots,d-1\}$ such that 
\begin{align*}
	k_{\sigma(j)}' &\equiv \epsilon_j ak_{j}r^{c_j} \pmod m,&
	\ell_{\sigma(j)}' &\equiv \epsilon_j b\ell_{j} \pmod {n/d},
	\qquad 
	\text{for all $1\leq j\leq p$}.
\end{align*}
Then, the spherical space forms
$S^{2dp-1}/\rho(\Gamma)$ and $S^{2dp-1}/\rho'(\Gamma)$ are strongly isospectral.
\end{theorem}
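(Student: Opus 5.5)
The plan is to use Theorem~\ref{thm:stronglyisospectrality}. It suffices to show that $\rho(\Gamma)$ and $\rho'(\Gamma)$ are almost conjugate in $\Ot(2dp)$, i.e.\ that there is a bijection $\rho(\Gamma)\to\rho'(\Gamma)$ sending each element to a conjugate element of $\Ot(2dp)$. Two elements of $\Ot(2dp)$ are conjugate if and only if they have the same characteristic polynomial. Equivalently, as complex matrices they must share the multiset of eigenvalues. So we will construct a bijection $\phi\colon\Gamma\to\Gamma$ such that $\rho'(\phi(\gamma))$ and $\rho(\gamma)$ have the same eigenvalues, with multiplicities, for every $\gamma$.

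First we compute the eigenvalues of $\pi_{k,\ell}(A^xB^y)$. If $d\nmid y$, we use that $(A^xB^y)^{d'}$ for a suitable power lies in the center. Concretely, $B^d$ commutes with $A$ because $r^d\equiv1$, and $\pi_{k,\ell}(B^d)=e^{2\pi i\ell d/n}\Id$. Then $g=A^xB^y$ has $g^{d/\gcd(y,d)}$ diagonal. The eigenvalues of $\pi_{k,\ell}(g)$ are therefore the full set of $e$-th roots of the eigenvalues of that power, where $e=d/\gcd(d,y)$. This makes the spectrum a function of $k$ through the orbit $\{kr^c\}$ and of $\ell y$ through $e^{2\pi i\ell y/n}$.

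Next we define $\phi$. The bijection we propose is $\phi(A^xB^y)=A^{a'x}B^{b'y}$, where $a'\equiv a^{-1}$ mod $m$ and $b'\equiv b^{-1}$ mod $n$. We must check that $\phi$ is a well-defined bijection of the set $\{A^xB^y\}$; this is clear as a map of normal forms, even if it is not a homomorphism when $b\not\equiv1\pmod d$. The main step is to verify, for each summand $j$, that $\pi_{k'_{\sigma(j)},\ell'_{\sigma(j)}}(\phi(g))\oplus\overline{\pi}_{k'_{\sigma(j)},\ell'_{\sigma(j)}}(\phi(g))$ has the same eigenvalues as $\pi_{k_j,\ell_j}(g)\oplus\overline\pi_{k_j,\ell_j}(g)$. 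The sign $\epsilon_j$ is absorbed by the complex-conjugate summand, and $r^{c_j}$ is absorbed because the spectrum depends only on the $r$-orbit of $k$. The remaining twist by $a$ and $b$ is cancelled by $a'$ and $b'$. The subtle point is that the congruence for $\ell$ holds only mod $n/d$, and that $b'y$ and $y$ may have different residues mod $d$. So we must compare the eigenvalue multisets via the explicit formula rather than via a group automorphism.

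The main obstacle is the eigenvalue computation when $d\nmid y$. I would first compute the characteristic polynomial of $\pi_{k,\ell}(A^xB^y)$ directly. It is a monomial matrix, a product of a diagonal matrix and a cyclic permutation of cycle type $(d/e)$ cycles of length $e$. Its characteristic polynomial is $\prod_{\text{cycles}}(t^e-\lambda_{\text{cycle}})$, where $\lambda$ is the product of the diagonal entries along the cycle times the $B^d$-scalar. We will then check that replacing $(k,\ell,x,y)$ by $(ak r^{c},b\ell,a'x,b'y)$ leaves this product invariant. Two facts are used: $\gcd(b'y,d)=\gcd(y,d)$ since $b'$ is prime to $n$, and the cycle products depend on $\ell$ only through $\ell d$ mod $n$. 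If this direct check fails for $b\not\equiv1\pmod d$, the fallback is to cite Proposition~6.3 and Theorem~6.4 of \cite{Wolf01} directly. Wolf shows that twisting a fixed point free representation by a Galois automorphism of $\mathbb{Q}(e^{2\pi i/mn})$ yields an almost conjugate image. Here the pair $(a,b)$ corresponds to the Galois element $\zeta\mapsto\zeta^{s}$ with $s\equiv a$ mod $m$ and $s\equiv b$ mod $n$, which exists since $\gcd(m,n)=1$. Galois conjugation preserves which group elements share characteristic polynomials up to that automorphism, and this gives the required bijection.
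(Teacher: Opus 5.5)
Your proposal is correct, and your primary route differs from the paper's. The paper gives no argument of its own: it invokes Proposition~6.3 and Theorem~6.4 of \cite{Wolf01}, whose mechanism is essentially your fallback. Take $s\equiv a\pmod m$ and $s\equiv b\pmod n$. Applying the automorphism $\zeta_{mn}\mapsto\zeta_{mn}^s$ of $\mathbb{Q}(\zeta_{mn})$ entrywise to $\pi_{k,\ell}$ in \eqref{eq:complex_irrep} gives exactly $\pi_{ak,b\ell}$. After absorbing $\epsilon_j$ into the conjugate summand and $r^{c_j}$ via Theorem~\ref{thm:irreps}, $\rho'$ is therefore equivalent to the Galois twist of $\rho$. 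Hence the eigenvalues of $\rho'(\gamma)$ are the $s$-th powers of those of $\rho(\gamma)$, and $\rho(\gamma^s)\mapsto\rho'(\gamma)$ is the almost conjugacy.

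Your direct bijection $A^xB^y\mapsto A^{a'x}B^{b'y}$ is a different map: it is neither a homomorphism nor a power map. The check you set up does succeed, so the hedge is unnecessary. You should, however, make explicit the one step where $b\not\equiv1\pmod d$ could have mattered.

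\begin{itemize}
\item With $e=d/\gcd(y,d)$ one has $(A^xB^y)^e=A^{xS(y)}B^{ey}$, where $S(y)=\sum_{j=0}^{e-1}r^{jy}$ is the sum of the elements of the order-$e$ subgroup $\langle r^y\rangle\subset\Z_m^\times$.
\item Since $\gcd(b',e)=1$, we have $\langle r^{b'y}\rangle=\langle r^y\rangle$, and hence $S(b'y)\equiv S(y)\pmod m$.
\item From $d\mid ey$, $\ell'\equiv b\ell\pmod{n/d}$ and $bb'\equiv1\pmod n$ you get $\zeta_n^{\ell' e b'y}=\zeta_n^{\ell e y}$.
\end{itemize}

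Together these give $\pi_{ak,\ell'}(\phi(\gamma))^e=\pi_{k,\ell}(\gamma)^e$ exactly. Both matrices are monomial, and their permutations have the same cycles, namely the cosets of $\langle\gcd(y,d)\rangle$ in $\Z_d$, each of length $e$. So their characteristic polynomials $\prod_C(t^e-\mu_C)$ coincide.

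Incidentally, $(A^xB^y)^e$ need not be central, contrary to your first remark. It is only diagonal under $\pi_{k,\ell}$, which is all you use.

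As for what each route buys: yours is self-contained and shows concretely why the extra condition $b\equiv1\pmod d$ of Theorem~\ref{thm:caracterizacionTypeIsphericalspaceforms} is invisible to eigenvalues. The cost is the monomial-matrix computation and dependence on the Type~I presentation. The Galois argument needs no computation and applies verbatim to any fixed point free group.
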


\begin{remark}\label{rem:equal-condition}
Note that the condition $b\equiv 1\pmod d$ in \eqref{eq:conditions-a-b} is omitted in Theorem~\ref{thm:Wolf-strong-isospectrality}. This is essentially the only difference between the hypotheses of Theorem~\ref{thm:caracterizacionTypeIsphericalspaceforms} and Theorem~\ref{thm:Wolf-strong-isospectrality}.
\end{remark}

The results of Wolf for the other types are very similar, but more technical. However, when one assumes $p=1$ (i.e., $\rho$ and $\rho'$ are irreducible), the hypotheses are trivially satisfied in all types, yielding the following statement (see \cite[Cor.~7.2]{Wolf01}).

\begin{theorem}\label{thm:Wolf-strong-isospectrality-irreducible}
	Let $\Gamma$ be any finite fixed point free group, and let $\rho$ and $\rho'$ be irreducible real fixed point free representations of $\Gamma$. Then the spherical space forms $S^q/\rho(\Gamma)$ and $S^q/\rho'(\Gamma)$ are strongly isospectral.
\end{theorem}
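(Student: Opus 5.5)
The plan is to use Theorem~\ref{thm:stronglyisospectrality}. It reduces the claim to showing that $\rho(\Gamma)$ and $\rho'(\Gamma)$ are almost conjugate in $\Ot(q+1)$. Concretely, I will construct a bijection $\phi\colon\rho(\Gamma)\to\rho'(\Gamma)$ such that $\phi(g)$ is conjugate to $g$ in $\Ot(q+1)$ for every $g$. Two orthogonal matrices are conjugate in $\Ot(q+1)$ exactly when they have the same characteristic polynomial, i.e.\ the same eigenvalues with multiplicities. So it suffices to produce a bijection $\Gamma\to\Gamma$, $\gamma\mapsto\gamma'$, with $\rho'(\gamma')$ and $\rho(\gamma)$ isospectral as matrices.

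The key step is to show that $\rho'$ is a Galois conjugate of $\rho$. Let $N=|\Gamma|$ and $\zeta=e^{2\pi i/N}$. I claim there is $t\in\Z_N^\times$ such that the complexifications satisfy $\chi_{\rho'}(\gamma)=\sigma_t(\chi_\rho(\gamma))$ for all $\gamma$, where $\sigma_t(\zeta)=\zeta^t$.

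For Type~I this follows from Theorem~\ref{thm:irreps}. Write $\rho\simeq\pi_{k,\ell}\oplus\overline\pi_{k,\ell}$ and $\rho'\simeq\pi_{k',\ell'}\oplus\overline\pi_{k',\ell'}$. All matrix entries in \eqref{eq:complex_irrep} are powers of $e^{2\pi i/m}$ and $e^{2\pi i d/n}$. Since $\gcd(m,n)=1$, I can choose by the Chinese remainder theorem an integer $t$ coprime to $mn$ with $t\equiv k'k^{-1}\pmod m$ and $t\equiv \ell'\ell^{-1}\pmod{n/d}$, adjusting modulo the remaining primes of $n$ to keep $t$ invertible. Applying $\sigma_t$ entrywise to $\pi_{k,\ell}$ then gives exactly $\pi_{tk,t\ell}\simeq\pi_{k',\ell'}$. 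Since $\sigma_t$ commutes with complex conjugation, it also sends $\overline\pi_{k,\ell}$ to $\overline\pi_{k',\ell'}$.

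For the remaining types of fixed point free groups I would invoke Wolf's classification \cite[Ch.~6--7]{Wolf-book}. The needed statement is that the irreducible complex fixed point free representations of a fixed point free group form a single orbit under $\mathrm{Gal}(\mathbb{Q}(\zeta)/\mathbb{Q})$, up to complex conjugation. The same argument then applies to the real irreducible summands.

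Given the Galois relation, the eigenvalues of $\rho'(\gamma)$ are the $t$-th powers of those of $\rho(\gamma)$. These are precisely the eigenvalues of $\rho(\gamma^t)$. Since $\gcd(t,N)=1$, the map $\gamma\mapsto\gamma^t$ is a bijection of $\Gamma$. Setting $\phi(\rho(\gamma^t))=\rho'(\gamma)$ therefore gives a well-defined bijection, because $\rho$ and $\rho'$ are faithful. It maps each element to an $\Ot(q+1)$-conjugate, so $\rho(\Gamma)$ and $\rho'(\Gamma)$ are almost conjugate. Theorem~\ref{thm:stronglyisospectrality} then gives strong isospectrality.

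The main obstacle is the Galois transitivity for non-Type~I groups, namely binary polyhedral extensions and the $SL(2,5)$-type groups. There the irreducible representations are built by tensoring Type~I pieces with two-dimensional quaternionic representations. I would first check that the Galois action also permutes these quaternionic factors; the irrationality $\sqrt5$ in the $SL(2,5)$ case is the delicate point. Failing that, I would simply cite Wolf's result \cite[Cor.~7.2]{Wolf01} for those cases.
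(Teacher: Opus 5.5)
Your proposal is correct and follows essentially the paper's route. The paper simply invokes Wolf \cite[Cor.~7.2]{Wolf01}, noting that for $p=1$ the hypotheses of Wolf's Galois-type criterion (Theorem~\ref{thm:Wolf-strong-isospectrality} in Type~I, and its analogues for the other types) hold trivially. Your CRT choice of $t$ is exactly that verification in Type~I, and your bijection $\gamma\mapsto\gamma^t$ combined with Theorem~\ref{thm:stronglyisospectrality} makes explicit why such a criterion yields almost conjugacy.

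For the non-Type~I groups you, like the paper, ultimately rest on Wolf's classification. Your Galois-transitivity claim is precisely what makes his hypotheses trivial there, so nothing is missing relative to the paper's argument.
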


The corresponding result for ordinary isospectrality and $\Gamma$ of Type I had previously been established by Ikeda~\cite[Thm.~1]{Ikeda83}.

\subsection{Lens spaces}\label{subsec:lensspaces}
For the purposes of this paper, we will refer to any spherical space form with cyclic fundamental group as a \emph{lens space}.
For $m,s_1,\dots,s_h$ integers with $m>0$ and $\gcd(m,s_j)=1$ for all $j$, we denote by $L(m; s_1,\ldots,s_h)$ the spherical space form $S^{2h-1}/G$ with $G$ the cyclic group generated by the matrix
\begin{equation}
	\left(
	\begin{array}{ccc}
		R(s_1/m) &        & 0\\
		& \ddots &  \\
		0        &        & R(s_h/m)
	\end{array}
	\right).
\end{equation}
where 
$ 
	R(\theta)=
	\left(\begin{smallmatrix}
		\cos 2\pi\theta & \sin 2\pi\theta\\
		-\sin 2\pi\theta & \cos 2\pi\theta
	\end{smallmatrix}\right)
$ 
for any $\theta\in\R$.

Two lens spaces $L(m; s_1,\ldots,s_h)$ and $L(m; s_1',\ldots,s_h')$ are 
\emph{isometric} if and only if there exist $t\in \Z$ prime to $m$, $\epsilon_1,\dots\epsilon_h \in \{-1,1\}$ and a permutation $\sigma$ of $\{1,\ldots,h\}$ such that 
$$
s_{\sigma(j)}' \equiv \epsilon_j t s_{j} \pmod{m}
\qquad\text{for all }\, 1\leq j\leq h .
$$

Write $\zeta_m =e^{2\pi i/m}$. 
It follows from \eqref{eq:Ikeda-function} that 
\begin{equation}\label{eq:Ikedalens}
	F_L(z)=\frac{1-z^2}{m} \sum_{a=0}^{m-1} \prod_{j=1}^{h} \frac{1}{\left(1-z\zeta_m^{as_j}\right) \left(1-z\zeta_m^{-as_j}\right)}.
\end{equation}

There are several examples of non-isometric isospectral lens spaces 
(see e.g.\ \cite{Ikeda80_isosp-lens}, \cite{Ikeda89}, \cite{GornetMcGowan20}, \cite{LMR-onenorm}, \cite{DeFordDoyle18}, \cite{Lauret-computationalstudy}). 
However, strongly isospectral lens spaces are necessarily isometric (see \cite[Prop.~7.2]{LMR-onenorm}).
Indeed, by Theorem~\ref{thm:stronglyisospectrality}, strong isospectrality implies that the corresponding cyclic subgroups are almost conjugate. However, almost conjugate cyclic subgroups of $\Ot(2h-1)$ are necessarily conjugate, and hence the lens spaces are isometric.

\section{Main Theorem}

In this section, we introduce some particular Type~I groups and then prove Theorem~\ref{thm:introduction-main} by showing that certain reducible fixed point free representations give rise to spherical space forms with the same spectral generating function, although their images are not almost conjugate.

\subsection{A special class of Type I groups}

In the rest of the article we focus on Type~I groups of the form $\Gamma_2(m,4,r)$ for some $m,r\in\N$.
In the notation of Subsection~\ref{subsec:TypeIgroups}, we are picking $d=2$ and $n=4$. 
Since $r$ has order $d=2$ in $\Z_m^\times$, $r^2\equiv 1\pmod m$, so $m\mid (r-1)(r+1)$. 
However, $\gcd(m,r-1)=1$ by Lemma~\ref{lem:typeI}, thus $m\mid r+1$. 
We conclude that $r\equiv -1\equiv m-1 \pmod m$, thus $\Gamma_2(m,4,r)=\Gamma_2(m,4,m-1)$, which allows us to assume $r=m-1$ without loosing generality. 
Furthermore, it is easy to see that $\Gamma_2(m,4,m-1)$ is fixed point free from  Remark~\ref{rem:fixedpointfreeTypeIgroups}.

Summing up, we will assume for the rest of the paper that 
\begin{equation}\label{eq:Gamma_2(m,4,m-1)}
\Gamma=\Gamma_2(m,4,m-1),
\end{equation}
where $m$ is any positive odd integer number greater than $1$. 
The group $\Gamma$ is fixed point free and non-cyclic, has order $4m$, and is generated by elements $A$ and $B$ satisfying $A^m=B^4=1$ and $BAB^{-1}=A^{-1}$.

\begin{remark}\label{rem:orders}
One can check that $AB^2$ has order $2m$ in $\Gamma$, and the order of any element in $\Gamma\smallsetminus\langle AB^2\rangle $ divides $4$. 
This also follows from Claim~2 and the last paragraph of the proof of \cite[Thm.~3.3]{ColantonioLauret-heatTypeI}. 
\end{remark}

Let $\pi_{k,\ell}$ be an fixed point free irreducible complex representation of $\Gamma$ as in Theorem~\ref{thm:irreps}. 
We can assume $0< k< m$ and $0< \ell< n$ without loosing generality. 
Moreover, because $n/d=2$, we can assume $\ell=1$ up to equivalence of $\pi_{k,\ell}$ by Theorem~\ref{thm:irreps}.

We recall the notation $\zeta_m=e^{2\pi i/m}$. 
A direct computation using \eqref{eq:complex_irrep} gives
\begin{align}\label{eq:block_A}
\pi_{k,1}(A^a) &
	= \begin{pmatrix} \zeta_m^{ak} & 0 \\ 0 & \zeta_m^{-ak} \end{pmatrix},
&
\pi_{k,1}(A^a B) &
	= \begin{pmatrix} 0 & \zeta_m^{ak} \\ -\zeta_m^{-ak} & 0 \end{pmatrix},
&
\pi_{k,1}(B^2)&
	=-\Id_2.
\end{align}
The last identity is consistent with the facts $B^4=1$ and $B^2\neq 1$.

Any real irreducible fixed point free representation of $\Gamma$ is equivalent to one of the form 
\begin{equation}
	\rho_{k} := \pi_{k,1} \oplus \overline{\pi}_{k,1}
\end{equation}
for some $1\leq k\leq m-1$ with $\gcd(m,k)=1$, which has real degree $2d=4$. 
An arbitrary real fixed point free representation of $\Gamma$ is equivalent to one of the form 
$\rho_{k_1} \oplus \cdots \oplus \rho_{k_p}$,
for some integers $k_1,\dots,k_p$ satisfying $1\leq k_j\leq m-1$ and $\gcd(k_j,m)=1$ for all $j$.

We next observe that strongly isospectral spherical space forms with fundamental group isomorphic to $\Gamma$ produced by Theorem~\ref{thm:Wolf-strong-isospectrality} are in fact isometric. 

\begin{remark}
Suppose there are two real fixed point free representations of $\Gamma=\Gamma_2(m,4,m-1)$ satisfying the hypotheses in Theorem~\ref{thm:Wolf-strong-isospectrality}. 
In particular, there are integers $a,b$ satisfying certain conditions. 
Obviously, $b\equiv 1\pmod {d}$ (because $d=2$), hence the hypotheses of Theorem~\ref{thm:caracterizacionTypeIsphericalspaceforms} are also satisfied (see Remark~\ref{rem:equal-condition}), 
which ensures the isometry between the corresponding spherical space forms.
\end{remark}

We now derive an explicit expression of $F_{\rho(\Gamma)}(z)$ for a real representation $\rho=\rho_{k_1}\oplus\dots\oplus\rho_{k_p}$ of $\Gamma$ as above. 
In what follows, we set
\begin{equation}\label{eq:lens-space-m}
L_\rho = L(m;k_1,k_1,\ldots,k_p,k_p),
\end{equation}
which is a $(4p-1)$-dimensional lens space.

\begin{proposition}\label{prop:type-I-generating-function}
Let $\Gamma=\Gamma_2(m,4,m-1)$ and  $\rho=\rho_{k_1}\oplus\cdots\oplus\rho_{k_p}$, for some integers $m,k_1,\dots,k_p$ satisfying $m>0$, $m\equiv 1\pmod2$, and $\gcd(m,k_j)=1$ for all $j$. 
Then
\begin{equation}\label{eq:Ikeda-Gamma_2(m,4,m-1)}
F_{\rho(\Gamma)}(z)
= \frac{1}{4} \Big(F_{L_\rho}(z)+F_{L_\rho}(-z)\Big) + \frac{1-z^2}{2(1+z^2)^{2p}}.
	\end{equation}
\end{proposition}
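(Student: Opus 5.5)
We start from Molien's formula \eqref{eq:Ikeda-function} applied to $G=\rho(\Gamma)$, which has order $4m$ because $\rho$ is faithful. Each element of $\Gamma$ is written uniquely as $A^aB^b$ with $0\le a<m$ and $0\le b<4$, so we split the sum over $\Gamma$ into two parts. The first part consists of the elements $A^aB^{2c}$ with $c\in\{0,1\}$; these form the cyclic subgroup $\langle AB^2\rangle$ of order $2m$ (Remark~\ref{rem:orders}). The second part consists of the elements $A^aB$ and $A^aB^3$. For each piece we compute $\det(\Id-z\rho(g))$ blockwise. Since $\rho=\bigoplus_j(\pi_{k_j,1}\oplus\overline{\pi}_{k_j,1})$, this determinant is the product over $j$ of the determinants of $\Id-z\pi_{k_j,1}(g)$ and of its conjugate.

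For the first part we use \eqref{eq:block_A}. It gives $\pi_{k,1}(A^aB^{2c})=(-1)^c\diag(\zeta_m^{ak},\zeta_m^{-ak})$, so
\[
\det\bigl(\Id-z\rho(A^aB^{2c})\bigr)=\prod_{j=1}^p\Bigl((1-(-1)^cz\zeta_m^{ak_j})(1-(-1)^cz\zeta_m^{-ak_j})\Bigr)^2 .
\]
The $c=0$ terms, summed over $a$ and multiplied by $(1-z^2)/(4m)$, give exactly $\tfrac14F_{L_\rho}(z)$ by \eqref{eq:Ikedalens}; here each $k_j$ appears twice in $L_\rho$, which matches the square. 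The $c=1$ terms are the same expression with $z$ replaced by $-z$. Since $1-z^2$ is even, they give $\tfrac14F_{L_\rho}(-z)$.

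For the second part, each element $A^aB$ is sent by $\pi_{k,1}$ to the matrix $\left(\begin{smallmatrix}0&\zeta\\-\zeta^{-1}&0\end{smallmatrix}\right)$. This matrix has trace $0$ and determinant $1$, so its characteristic polynomial is $\lambda^2+1$ and $\det(\Id-zM)=1+z^2$. The conjugate representation gives the same factor. For $A^aB^3=A^aB\cdot B^2$ we get $-M$, which again yields $1+z^2$. Hence each of the $2m$ elements of this part contributes $(1+z^2)^{-2p}$. The second part therefore adds
\[
\frac{1-z^2}{4m}\cdot\frac{2m}{(1+z^2)^{2p}}=\frac{1-z^2}{2(1+z^2)^{2p}} .
\]

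The only real bookkeeping is in the first part, and it is not a serious obstacle. We must check that the orbit of $a$ over $\Z_m$ together with the sign $(-1)^c$ produces precisely the lens-space sums for $z$ and $-z$, without any extra reindexing. This holds because \eqref{eq:Ikedalens} is written as a sum over all $a\in\Z_m$, and the normalization $1/(4m)=\tfrac14\cdot\tfrac1m$ matches. We also confirm from \eqref{eq:complex_irrep} that $\pi_{k,1}(A^aB^3)=-\pi_{k,1}(A^aB)$, which is immediate from $\pi_{k,1}(B^2)=-\Id_2$.
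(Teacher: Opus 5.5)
Your proposal is correct and follows the paper's proof. You use the same split of $\Gamma$ into $\langle AB^2\rangle$ and the elements $A^aB$, $A^aB^3$, and the same blockwise determinant computations from \eqref{eq:block_A}. The only difference is a small shortcut: you write $\langle AB^2\rangle$ as $\{A^a\}\cup\{A^aB^2\}$ and read off $\tfrac14F_{L_\rho}(z)$ and $\tfrac14F_{L_\rho}(-z)$ directly. The paper instead first identifies $\rho(\langle AB^2\rangle)$ with the order-$2m$ lens space $\widetilde L_\rho$, and then recovers the same two sums by splitting into even and odd powers and reindexing.
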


\begin{proof}
The cyclic subgroup
$ 
	\langle AB^2\rangle = \{A^a:0\leq a<m\} \cup \{A^aB^2:0\leq a<m\}
$ 
of $\Gamma$ has order $2m$. 
Note that 
\begin{equation*}
\pi_{k_j}(AB^2)= 
-\pi_{k_j}(A)
= \zeta_{2m}^m \begin{pmatrix} \zeta_m^{k_j} & 0 \\ 0 & \zeta_m^{-k_j} \end{pmatrix}
= \begin{pmatrix} \zeta_{2m}^{2k_j+m} & 0 \\ 0 & \zeta_{2m}^{-2k_j+m} \end{pmatrix}
.
\end{equation*}
By \eqref{eq:block_A}, the $j$-th $(4\times4)$-block of $\rho(AB^2)$ is given by  
\begin{equation*}
	(\pi_{k_j,1}\oplus\overline{\pi}_{k_j,1})(AB^2)
= 
\begin{pmatrix} 
	R\big(\frac{2k_j+m}{2m}\big) \\ & R\big(\frac{-2k_j+m}{2m}\big)
\end{pmatrix}
.
\end{equation*}
We conclude that $\rho(\langle AB^2\rangle) = \langle -\rho(A)\rangle$ is precisely the cyclic group of order $2m$ defining the lens space 
\begin{equation}\label{eq:lens-space-2m}
\widetilde L_\rho:= L\left(2m;m+2k_1,m-2k_1,\ldots,m+2k_p,m-2k_p \right).
\end{equation}

The elements in $\Gamma\smallsetminus \langle AB^2\rangle$ are  $A^aB$ and $A^aB^3$ for $0\leq a<m$ (see the last paragraph of \cite[Thm.~3.3]{ColantonioLauret-heatTypeI}).
By \eqref{eq:block_A}, 
\begin{align*}
\det\left(\operatorname{Id}_{4p}-z\rho(A^aB)\right) &
= \prod_{j=1}^p \det\left(\operatorname{Id}_{2}-z\pi_{k_j}(A^aB)\right)
\det\left(\operatorname{Id}_{2}-z \overline{\pi_{k_j}(A^aB)}\right)
= (1+z^2)^{2p}.
\end{align*}
Similarly,  $\det\left(\operatorname{Id}_{4p}-z\rho(A^aB^3)\right)= \det\left(\operatorname{Id}_{4p}+z\rho(A^aB)\right) = (1+z^2)^{2p}$. 
Note that both terms are independent of $a$.

It follows from
\eqref{eq:Ikeda-function} that 
	\begin{equation}
		\begin{aligned}
F_{\rho(\Gamma)}(z)&
= \frac{1-z^2}{4m}
	\sum_{a=0}^{m-1}\sum_{b=0}^3
	\frac{1}{\det\big(\operatorname{Id}-z\, \rho(A^aB^b) \big)}
\\& 
= \frac{1-z^2}{4m}
	\left(
	\sum_{\gamma\in \langle AB^2\rangle}
	\frac{1}{\det\big(\operatorname{Id}-z\rho(\gamma) \big)}
	+
	\frac{2m}{(1+z^2)^{2p}}
	\right)
\\ & 
=
\frac{1}{2} F_{\widetilde L_\rho}(z)
+
\frac{1-z^2}{2(1+z^2)^{2p}}.
\end{aligned}
\end{equation}

It remains to show that $F_{\widetilde L_\rho}(z)=\frac12 (F_{L_\rho}(z)+F_{L_\rho}(-z))$. 
We have from \eqref{eq:Ikedalens} that 
\begin{align*}
F_{\widetilde L_\rho}(z) &
= \frac{1-z^2}{2m} 
\sum_{a=0}^{2m-1} \prod_{j=1}^{p} 
 \frac{1}{
 	\big(1-z\zeta_{2m}^{a(m+2k_j)}\big) 
 	\big(1-z\zeta_{2m}^{-a(m+2k_j)}\big)
 	\big(1-z\zeta_{2m}^{a(m-2k_j)}\big) 
 	\big(1-z\zeta_{2m}^{-a(m-2k_j)}\big)
 }
\\ & 
= \frac{1-z^2}{2m} 
\sum_{a'=0}^{m-1} \prod_{j=1}^{p} 
\frac{1}{
		\big(1-z\zeta_{m}^{2a'k_j}\big)^2 
		\big(1-z\zeta_{m}^{-2a'k_j}\big)^2
	}
	\qquad\text{(even terms $a=2a'$)}
\\ & \quad 
+ \frac{1-z^2}{2m} 
\sum_{a'=0}^{m-1} \prod_{j=1}^{p} 
\frac{1}{
		\big(1+z\zeta_{m}^{(2a'+1)k_j}\big)^2 
		\big(1+z\zeta_{m}^{-(2a'+1)k_j}\big)^2
	}
	\qquad\text{(odd terms $a=2a'+1$)}
\\ & 
= \frac{1-z^2}{2m} 
\sum_{a=0}^{m-1} \prod_{j=1}^{p} 
\frac{1}{\big(1-z\zeta_{m}^{ak_j}\big)^2 \big(1-z\zeta_{m}^{-ak_j}\big)^2}
\\ &\quad
+ \frac{1-z^2}{2m} 
\sum_{a=0}^{m-1} \prod_{j=1}^{p} 
\frac{1}{\big(1+z\zeta_{m}^{ak_j}\big)^2 \big(1+z\zeta_{m}^{-ak_j}\big)^2}
.
\end{align*}
The last identity follows because the maps $a'\mapsto 2a'$ and $a'\mapsto 2a'+1$ are bijections of $\Z_m$.
The proof is complete, since the last two terms are precisely $\frac12 F_{L_\rho}(z)$ and $\frac12F_{L_\rho}(-z)$, respectively.
\end{proof}

\begin{corollary}\label{cor:lens-reduction-m}
Let $\Gamma=\Gamma_2(m,4,m-1)$, $\rho=\rho_{k_1}\oplus\cdots\oplus\rho_{k_p}$, and $\rho'=\rho_{k_1'}\oplus\cdots\oplus\rho_{k_p'}$, for some integers $m,k_1,k_1',\dots,k_p,k_p'$ satisfying $m>0$, $m\equiv 1\pmod2$, and $\gcd(m,k_j)=1=\gcd(m,k_j')$ for all $j$. 
If $L_\rho$ and $L_{\rho'}$ are isospectral, then $S^{4p-1}/\rho(\Gamma)$ and	$S^{4p-1}/\rho'(\Gamma)$ are isospectral.
\end{corollary}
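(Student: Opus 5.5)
The corollary follows directly from Proposition~\ref{prop:type-I-generating-function} together with the criterion that isospectrality of spherical space forms is equivalent to equality of their spectral generating functions (the consequence of \eqref{eq:Ikeda-function} recalled in Subsection~\ref{subsec:sphericalspaceforms}).

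First, since $L_\rho$ and $L_{\rho'}$ are isospectral $(4p-1)$-dimensional lens spaces, that criterion gives $F_{L_\rho}(z)=F_{L_{\rho'}}(z)$ as formal power series. These generating functions are also rational functions, by \eqref{eq:Ikedalens}. Substituting $-z$ for $z$ then gives $F_{L_\rho}(-z)=F_{L_{\rho'}}(-z)$.

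Next, apply Proposition~\ref{prop:type-I-generating-function} to both $\rho$ and $\rho'$. Its hypotheses hold: $m$ is odd and positive, and every $k_j$ and $k_j'$ is prime to $m$. Both representations have the same number $p$ of irreducible summands, so the correction term $\frac{1-z^2}{2(1+z^2)^{2p}}$ is literally the same in both formulas. Hence
\[
F_{\rho(\Gamma)}(z)=\tfrac14\big(F_{L_\rho}(z)+F_{L_\rho}(-z)\big)+\tfrac{1-z^2}{2(1+z^2)^{2p}}
=\tfrac14\big(F_{L_{\rho'}}(z)+F_{L_{\rho'}}(-z)\big)+\tfrac{1-z^2}{2(1+z^2)^{2p}}
=F_{\rho'(\Gamma)}(z).
\]
Applying the criterion again, $S^{4p-1}/\rho(\Gamma)$ and $S^{4p-1}/\rho'(\Gamma)$ are isospectral.

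There is no real obstacle here; all the work was done in Proposition~\ref{prop:type-I-generating-function}. The only points to check are that both quotients live in the same sphere $S^{4p-1}$, which holds because both representations have degree $4p$, and that $\rho(\Gamma)$ and $\rho'(\Gamma)$ both act freely, which holds because $\rho$ and $\rho'$ are fixed point free.
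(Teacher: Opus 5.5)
Your proposal is correct and follows the paper's proof: from $F_{L_\rho}(z)=F_{L_{\rho'}}(z)$ you get the same identity at $-z$, and formula \eqref{eq:Ikeda-Gamma_2(m,4,m-1)} of Proposition~\ref{prop:type-I-generating-function}, whose correction term depends only on $p$, then gives $F_{\rho(\Gamma)}(z)=F_{\rho'(\Gamma)}(z)$. Your additional checks (both quotients lie in $S^{4p-1}$ and the actions are free) are valid but not needed beyond what the paper states.
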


\begin{proof}
By assumption, $F_{L_\rho}(z)=F_{L_\rho'}(z)$, and hence
$F_{L_\rho}(-z)=F_{L_\rho'}(-z)$.
Now, \eqref{eq:Ikeda-Gamma_2(m,4,m-1)} yields
$F_{\rho(\Gamma)}(z)=F_{\rho'(\Gamma)}(z)$, which is the desired conclusion.
\end{proof}

\subsection{Isospectral and not strongly isospectral spherical space forms}

We are now in position to prove Theorem~\ref{thm:introduction-main}, as a consequence of the following result.

\begin{theorem}\label{thm:main}
Let $\Gamma=\Gamma_2(m,4,m-1)$, for some prime integer $m\geq13$, and set $p=\frac{m-5}{2}$. 
Let $T=\{\alpha,\beta\}$ and
$T'=\{\alpha',\beta'\}$ be two-element subsets of $U_m:= \{1,2,\ldots,\frac{m-1}{2}\}$ satisfying 
$2\alpha\not\equiv\pm\beta\pmod m$, 
$\alpha\not\equiv\pm2\beta\pmod m$,
$2\alpha'\not\equiv\pm\beta'\pmod m$, and
$\alpha'\not\equiv\pm2\beta'\pmod m$. 
Write 
\begin{align}
U_m\smallsetminus T &=\{k_1,\dots,k_p\},&
U_m\smallsetminus T' &=\{k_1',\dots,k_p'\},
\end{align}
and set $\rho=\rho_{k_1}\oplus \dots\oplus \rho_{k_p}$ and $\rho'=\rho_{k_1'}\oplus \dots\oplus \rho_{k_p'}$.

The spherical space forms $S^{2m-11}/\rho(\Gamma)$ and $S^{2m-11}/\rho'(\Gamma)$ are isospectral. 
Moreover, they are not strongly isospectral (and consequently non-isometric) if the lens spaces $L_\rho$ and $L_{\rho'}$ (see \eqref{eq:lens-space-m}) are not isometric. 
\end{theorem}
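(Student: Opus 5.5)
The plan has two parts. For isospectrality we reduce, via Corollary~\ref{cor:lens-reduction-m}, to showing that the lens spaces $L_\rho$ and $L_{\rho'}$ are isospectral. For the failure of strong isospectrality we use Theorem~\ref{thm:stronglyisospectrality} together with the cyclic subgroup $\langle AB^2\rangle$ singled out in Remark~\ref{rem:orders}.

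\emph{Isospectrality.} By \eqref{eq:Ikedalens},
\[
F_{L_\rho}(z)=\frac{1-z^2}{m}\sum_{a=0}^{m-1}\prod_{k\in U_m\smallsetminus T}\frac{1}{(1-z\zeta_m^{ak})^2(1-z\zeta_m^{-ak})^2}.
\]
The $a=0$ term equals $(1-z)^{-4p}$ for both $T$ and $T'$. For $a\not\equiv0$ we use that $m$ is prime, so $\{\pm ak: k\in U_m\}$ runs over $\Z_m^\times$. This gives
\[
\prod_{k\in U_m}(1-z\zeta_m^{ak})(1-z\zeta_m^{-ak})=\frac{1-z^m}{1-z}.
\]
Hence the $a$-th term equals
\[
\frac{(1-z)^2}{(1-z^m)^2}\,P_T(a,z),\qquad P_T(a,z):=\prod_{t\in T}(1-z\zeta_m^{at})^2(1-z\zeta_m^{-at})^2 .
\]
It therefore suffices to show that $\sum_{a=0}^{m-1}P_T(a,z)$ depends only on $|T|$. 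Adding the $a=0$ term is harmless, since $P_T(0,z)=(1-z)^8$ for every $T$.

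Expanding $P_T(a,z)$, each monomial has the form $\pm z^e\zeta_m^{a(c_1\alpha+c_2\beta)}$ with integers $|c_1|,|c_2|\le 2$. Summing over $a\in\Z_m$ kills every monomial except those with $c_1\alpha+c_2\beta\equiv0\pmod m$. We check that the only such pair is $c_1=c_2=0$:
\begin{itemize}
\item If exactly one of $c_1,c_2$ is nonzero, we would need $m$ to divide $c_i\alpha$ or $c_i\beta$, impossible since $m\ge13$ is prime and $\alpha,\beta\in U_m$.
\item If $|c_1|=|c_2|\neq0$, we would get $\alpha\equiv\pm\beta$, which is excluded because $\alpha\neq\beta$ lie in $U_m$.
\item If $\{|c_1|,|c_2|\}=\{1,2\}$, we would get $2\alpha\equiv\pm\beta$ or $\alpha\equiv\pm2\beta$, both excluded by hypothesis.
\end{itemize}
So the surviving coefficient is $m$ times the coefficient of the terms with $c_1=c_2=0$. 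That coefficient is a universal polynomial in $z$, the same for $T$ and $T'$. Hence $F_{L_\rho}=F_{L_{\rho'}}$, and Corollary~\ref{cor:lens-reduction-m} gives the first claim. (The hypothesis $m\ge13$ guarantees $p\ge4$ and that admissible $T$ exist.)

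\emph{Not strongly isospectral.} Suppose $\rho(\Gamma)$ and $\rho'(\Gamma)$ are almost conjugate in $\Ot(4p)$, as Theorem~\ref{thm:stronglyisospectrality} would require. The almost conjugacy bijection maps each element to an $\Ot(4p)$-conjugate, so it preserves orders. Take $g=\rho(AB^2)$, which has order $2m$. By Remark~\ref{rem:orders}, its image is an element of order $2m$ in $\rho'(\Gamma)$, which must generate $\rho'(\langle AB^2\rangle)$. Thus the cyclic groups $\rho(\langle AB^2\rangle)$ and $\rho'(\langle AB^2\rangle)$ are conjugate in $\Ot(4p)$. Each has a unique subgroup of order $m$, namely $\langle\rho(A)\rangle$ and $\langle\rho'(A)\rangle$, so these are conjugate as well. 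These subgroups are exactly the groups defining $L_\rho$ and $L_{\rho'}$, since $\rho_k(A)$ acts by $R(k/m)\oplus R(-k/m)$, which is conjugate to $R(k/m)\oplus R(k/m)$. Hence $L_\rho$ and $L_{\rho'}$ would be isometric, contradicting the hypothesis. Non-isometry of the space forms then follows, because isometric spaces are strongly isospectral.

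The main obstacle is the cancellation step: verifying that the product over the full set $U_m$ collapses to $(1-z^m)/(1-z)$, and tracking exactly which relations $c_1\alpha+c_2\beta\equiv0$ could survive the character sum, which is where the four hypotheses on $T$ and $T'$ enter.
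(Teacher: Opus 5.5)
Your proposal is correct and follows essentially the same route as the paper: you reduce to isospectrality of $L_\rho$ and $L_{\rho'}$ via Corollary~\ref{cor:lens-reduction-m}, collapse the full product over $U_m$ to $\Phi_m(z)=(1-z^m)/(1-z)$, and show via the hypotheses that no relation $c_1\alpha+c_2\beta\equiv 0$ with $0<\max(|c_1|,|c_2|)\le 2$ survives the character sum; then almost conjugacy forces $\langle\rho(A)\rangle$ and $\langle\rho'(A)\rangle$ to be conjugate, so $L_\rho$ and $L_{\rho'}$ would be isometric. The differences are cosmetic: you include $a=0$, so the character sum is simply $m$ or $0$ and you never need the explicit polynomial \eqref{eq:universal-polynomial}; you conjugate the single generator $\rho(AB^2)$ directly instead of quoting that almost conjugate cyclic subgroups are conjugate; and you correctly take $S^{2m-11}/\langle\rho(A)\rangle$ as the group quotient defining $L_\rho$, whereas the paper's final sentence writes $\langle\rho(AB^2)\rangle$.
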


\begin{proof}
We first prove isospectrality between $S^{2m-11}/\rho(\Gamma)$ and $S^{2m-11}/\rho'(\Gamma)$. 
By Corollary~\ref{cor:lens-reduction-m}, it is enough to prove that $L_\rho$ and $L_{\rho'}$ are isospectral. 
By \eqref{eq:Ikedalens}, it clearly suffices to show that
	\begin{equation}\label{eq:main-lens-sums}
		\sum_{a=1}^{m-1}
		\frac{1}
		{\displaystyle\prod_{j=1}^p
			(1-z\zeta_m^{ak_j})^2(1-z\zeta_m^{-ak_j})^2}
		=
		\sum_{a=1}^{m-1}
		\frac{1}
		{\displaystyle\prod_{j=1}^p
			(1-z\zeta_m^{ak_j})^2(1-z\zeta_m^{-ak_j})^2}.
	\end{equation}
	
	Since $m$ is prime, multiplication by any
	$a\in\{1,\ldots,m-1\}$ permutes the nonzero residue classes modulo $m$.
	Consequently,
\begin{equation*}
		\prod_{j\in U_m}
		(1-z\zeta_m^{aj})(1-z\zeta_m^{-aj})
		=
		\Phi_m(z),
\end{equation*}
the $m$-th cyclotomic polynomial.  
Multiplying by $\Phi_m(z)$ to both sides,  \eqref{eq:main-lens-sums} is equivalent to 
\begin{equation}\label{eq:main-lens-sums2}
\sum_{a=1}^{m-1} 
\prod_{j\in\{\alpha,\beta\}}
	(1-z\zeta_m^{aj})^2(1-z\zeta_m^{-aj})^2
	= 
\sum_{a=1}^{m-1}
\prod_{j\in\{\alpha',\beta'\}}
(1-z\zeta_m^{aj})^2(1-z\zeta_m^{-aj})^2
.
\end{equation}

We now show that the polynomial $P(z)$ at the left-hand side of \eqref{eq:main-lens-sums2} depends only on $m$. 
The coefficient of $z^q$ is a sum, over $a=1, \ldots,m-1$, of terms of the form 	$(-1)^q\zeta_m^{a\lambda}$, where $\lambda$ is the sum of $q$ elements chosen from the multiset
\begin{equation*}
	W=[\alpha,\alpha,-\alpha,-\alpha,
	\beta,\beta,-\beta,-\beta].
\end{equation*}

We first determine which such sums can vanish modulo $m$. 
Any subsum of $W$ has the form
\[
u\alpha+v\beta,
\qquad u,v\in\{-2,-1,0,1,2\}.
\]
By using the hypotheses on $\alpha,\beta$, one can check that 
\begin{equation}\label{eq:no-nontrivial-zero-sums}
	u\alpha+v\beta\equiv0\pmod m
	\quad\Longrightarrow\quad
	u=v=0.
\end{equation}

Let $V_q$ denote the number of $q$-element selections from $W$, counted
with multiplicities, whose sum is congruent to zero modulo $m$. 
By \eqref{eq:no-nontrivial-zero-sums}, such a selection must contain the
same number of copies of $\alpha$ and $-\alpha$, and the same number of copies of $\beta$ and $-\beta$. 
A direct count gives
\begin{equation*}
	V_0=1,\qquad
	V_1=0,\qquad
	V_2=8,\qquad
	V_3=0,\qquad
	V_4=18,
\end{equation*}
and, by taking complementary selections,
$
V_q=V_{8-q}.
$

For $\lambda\in\mathbb{Z}$, the standard root-of-unity identity gives
\begin{equation}\label{eq:root-of-unity-sum}
	\sum_{a=1}^{m-1}\zeta_m^{a\lambda}
	=
	\begin{cases}
		m-1,&\lambda\equiv0\pmod m,\\
		-1,&\lambda\not\equiv0\pmod m.
	\end{cases}
\end{equation}
Consequently, the coefficient of $z^q$ in $P(z)$ is 
\[
(-1)^q\left((m-1)V_q-(\tbinom{8}{q}-V_q)\right)
=
(-1)^q\left(mV_q-\tbinom{8}{q}\right).
\]
Therefore,
\begin{equation}\label{eq:universal-polynomial}
\begin{aligned}
P(z)=
	&(m-1)+8z+(8m-28)z^2+56z^3+(18m-70)z^4
\\ &
+56z^5+(8m-28)z^6 +8z^7+(m-1)z^8.
\end{aligned}
\end{equation}
In particular, this polynomial depends only on $m$ and not on the
choice of $T=\{\alpha,\beta\}$.

The same argument applies to $T'=\{\alpha',\beta'\}$ by the analogous assumptions on $\alpha'$ and $\beta'$.
We conclude that \eqref{eq:main-lens-sums2} holds. Consequently, the associated lens spaces are isospectral, and Corollary~\ref{cor:lens-reduction-m}
shows that the two spherical space forms are isospectral.

Finally, suppose that the two spherical space forms are strongly isospectral. 
By Theorem~2.1, the subgroups $\rho(\Gamma)$ and $\rho'(\Gamma)$ are almost conjugate in $\Ot(2m-10)$. 
Since conjugate elements have the same order, almost conjugacy also holds after restricting to the elements of odd order, which are $\langle A\rangle$ by Remark~\ref{rem:orders}. 
It follows that the cyclic subgroups $\langle\rho(A)\rangle$ and $\langle\rho'(A)\rangle$ are almost conjugate in $\Ot(2m-10)$, as well as $\langle\rho(AB^2)\rangle= \langle-\rho(A)\rangle$ and $\langle\rho'(AB^2)\rangle= \langle-\rho'(A)\rangle$. 
Almost conjugate cyclic subgroups are necessarily conjugate, and hence the lens spaces $L_\rho=S^{2m-11}/\langle\rho(AB^2)\rangle$ and $L_{\rho'}=S^{2m-11}/\langle\rho'(AB^2)\rangle$ are isometric. 
This completes the proof. 
\end{proof}

\subsection{An explicit infinite family}

In this subsection we show that the hypotheses of Theorem~\ref{thm:main} can be satisfied uniformly
for every prime $m\geq 13$.

\begin{corollary}\label{cor:explicit-family}
	Let $m\geq 13$ be a prime number, and set $h=\frac{m-1}{2}$, $p=h-2=\frac{m-5}{2}$, $U_m=\{1,\ldots,h\}$, $T=\{h-1,h\}$ and $T'=\{h-2,h\}$.	Write $U_m\smallsetminus T=\{k_1,\ldots,k_p\}$ and $U_m\smallsetminus T'=\{k'_1,\ldots,k'_p\}$, and set
	$\rho=\bigoplus_{j=1}^{p}\rho_{k_j}$ and $\rho'=\bigoplus_{j=1}^{p}\rho_{k'_j}$. Then, the spherical space forms $S^{2m-11}/\rho(\Gamma)$ and $S^{2m-11}/\rho'(\Gamma)$ are isospectral but not strongly isospectral.
\end{corollary}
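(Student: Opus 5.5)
The plan is to check that the explicit choices $T=\{h-1,h\}$ and $T'=\{h-2,h\}$ satisfy the hypotheses of Theorem~\ref{thm:main}, and then to show that the lens spaces $L_\rho$ and $L_{\rho'}$ are not isometric. Theorem~\ref{thm:main} then gives both isospectrality and failure of strong isospectrality.

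\emph{Hypotheses of Theorem~\ref{thm:main}.} Write $m=2h+1$, so that $2h\equiv -1$, $2(h-1)\equiv -3$ and $2(h-2)\equiv -5 \pmod m$.

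For $T$, take $\alpha=h-1$ and $\beta=h$.
\begin{itemize}
\item $2\alpha\equiv-3$, so $2\alpha\equiv\pm\beta$ would force $h\equiv\pm3 \pmod m$. Since $0<h<m$, this means $h=3$ or $h=m-3$, i.e.\ $m=7$ or $m=5$.
\item $2\beta\equiv -1$, so $\alpha\equiv\pm2\beta$ would force $h-1\equiv\pm1$, i.e.\ $h-1=1$ or $h-1=m-1$. This gives $m=5$ or is impossible.
\end{itemize}

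For $T'$, take $\alpha'=h-2$ and $\beta'=h$.
\begin{itemize}
\item $2\alpha'\equiv-5$, so $2\alpha'\equiv\pm\beta'$ would force $h=5$ or $h=m-5$, i.e.\ $m=11$ or $m=9$.
\item $\alpha'\equiv\pm2\beta'\equiv\mp1$ would force $h-2=1$, i.e.\ $m=7$.
\end{itemize}

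All these exceptions are excluded by $m\ge 13$, and $T,T'\subset U_m$ are genuine two-element sets. So the two space forms are isospectral by Theorem~\ref{thm:main}.

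\emph{Non-isometry of $L_\rho$ and $L_{\rho'}$.} By the isometry criterion for lens spaces in Subsection~\ref{subsec:lensspaces}, an isometry $L_\rho\cong L_{\rho'}$ gives $t\in\Z_m^\times$ carrying the multiset $[\pm k_j,\pm k_j]$ onto $[\pm k'_j,\pm k'_j]$.

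Identify $U_m$ with $\Z_m^\times/\{\pm1\}$. Then multiplication by $t$ sends the set $U_m\smallsetminus T$ onto $U_m\smallsetminus T'$. Since it is a bijection of $U_m$, it also sends $T$ onto $T'$ modulo $\pm1$.

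Now rescale both sets by $-2$, which is harmless since it only changes $t$. Then $T$ becomes $\{3,1\}$ and $T'$ becomes $\{5,1\}$ modulo $\pm1$. A bijection $t\{1,3\}=\{1,5\}$ modulo signs preserves the ratio of the two elements up to sign and inversion. Hence $3\equiv\pm5^{\pm1}\pmod m$, which means $m$ divides one of
\[
3-5=-2,\qquad 3+5=8,\qquad 15-1=14,\qquad 15+1=16.
\]
No prime $m\ge13$ divides any of these numbers, so $L_\rho$ and $L_{\rho'}$ are not isometric.

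The last part of Theorem~\ref{thm:main} then shows that the space forms are not strongly isospectral. The only delicate step is the reduction of lens-space isometry to $tT=T'$ in $\Z_m^\times/\{\pm1\}$. It uses that each $k_j$ appears with multiplicity exactly two in both lens spaces and that the $k_j$ are distinct classes modulo $\pm1$.
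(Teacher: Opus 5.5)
Your proposal is correct and follows the paper's proof. You check the congruence hypotheses of Theorem~\ref{thm:main}, which the paper leaves as a direct verification; your case check omits $h-2\equiv-1$ (i.e.\ $m=3$), but that case is harmless. You then reduce an isometry $L_\rho\cong L_{\rho'}$ to $t(\pm T)=\pm T'$ by taking complements and rule it out by comparing ratios, and your rescaling by $-2$ to $\{1,3\}$ and $\{1,5\}$ only tidies the paper's computation with $h$, giving $m\mid 2,8,14,16$ in place of $m\mid 1,4,7,16$.
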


\begin{proof}
	Using $2h\equiv-1\pmod m$, a direct verification shows that
	$T$ and $T'$ satisfy the congruence hypotheses of
	Theorem~\ref{thm:main}.
	
	It remains to prove that $L_\rho$ and $L_{\rho'}$ are not isometric.
	For a subset $E\subseteq\Z_m^\times$, we write $\pm E =E\cup(-E).$
	Note that the lens space $L_\rho$ (resp. $L_{\rho'}$) is isometric to the lens space whose multiset of parameters is $\pm(U_m\smallsetminus T)$ (resp. $\pm(U_m\smallsetminus T').$)
	
	Suppose that $L_\rho$ and $L_{\rho'}$ are isometric. Since these
	multisets are invariant under multiplication by $-1$, the lens space
	isometry criterion (see Subsection~\ref{subsec:lensspaces}) would give an integer $a$ relatively prime to $m$
	such that $a(\pm(U_m\smallsetminus T)) =	\pm(U_m\smallsetminus T').$
	Since $\pm U_m=\Z_m^\times$	and multiplication by $a$ permutes $\Z_m^\times$, we obtain $a(\pm T)=\pm T'.$
	
	Set $x=h-1, y=h, x'=h-2$ and $y'=h.$  Thus $\pm T=\{\pm x,\pm y\}$ and $\pm T'=\{\pm x',\pm y'\}.$
	Since multiplication by $a$ sends the pair $\{\pm x,\pm y\}$ onto $\{\pm x',\pm y'\}$, either
	\begin{equation}\label{eq:alternatives}
		yx'\equiv\pm y'x\pmod m
		\qquad\text{or}\qquad
		yy'\equiv\pm xx'\pmod m.
	\end{equation}
	
	The first alternative gives	$h(h-2)\equiv\pm h(h-1)\pmod m$, thus $h-2\equiv\pm(h-1)\pmod m$ because $h$ is invertible modulo $m$.
	If the sign is positive, then $m$ divides $1$. If the sign is
	negative, then $2h-3\equiv0\pmod m.$
	Together with $2h\equiv-1\pmod m$, this implies that $m$ divides $4$.
	
	The second alternative in \eqref{eq:alternatives} gives $h^2\equiv\pm(h-1)(h-2)\pmod m.$
	If the sign is positive, then $3h-2\equiv0\pmod m$ and
	multiplying this congruence by $2$ and using
	$2h\equiv-1\pmod m$ we obtain that $m$ divides $7$.
	If the sign is negative, then $2h^2-3h+2\equiv0\pmod m$, and
	multiplying this congruence by $4$ and using $2h\equiv-1\pmod m$ and $4h^2\equiv1\pmod m$ we obtain that $m$ divides $16$.

None of these possibilities can occur because $m$ is prime greater than $12$. 
Therefore, $L_\rho$ and $L_{\rho'}$ are not isometric, and the conclusion follows from Theorem~\ref{thm:main}.
\end{proof}

\begin{example}\label{ex:first-example}
	For $m=13$, as in Cororally~\ref{cor:explicit-family}, we have $h=6$, $p=4$, $U_{13}\smallsetminus T=\{1,2,3,4\}$, $U_{13}\smallsetminus T'=\{1,2,3,5\}$, and the corresponding representations of
	$\Gamma=\Gamma_2(13,4,12)$ are
	\begin{equation*}
		\rho
		=
		\rho_1\oplus\rho_2\oplus\rho_3\oplus\rho_4,
		\qquad
		\rho'
		=
		\rho_1\oplus\rho_2\oplus\rho_3\oplus\rho_5.
	\end{equation*}
	The associated lens spaces are
	\begin{equation*}
		L_\rho=L(13;1,1,2,2,3,3,4,4)
	    \qquad\text{and}\qquad
		L_{\rho'}=L(13;1,1,2,2,3,3,5,5).
	\end{equation*}
The appendix \cite{Lauret-appendix} to the article \cite{Lauret-computationalstudy} shows on page~181 that $L_{\rho}$ (denoted there by $L_{492}$) is isospectral to $L_{\rho'}$ (denoted there by $L_{495}$).
\end{example}

	\bibliographystyle{plain}

\end{document}